\numberwithin{equation}{section} 
\newcommand{\dif}{\,\mathrm{d}}
\begin{document}

\abovedisplayskip 6pt plus 2pt minus 2pt \belowdisplayskip 6pt
plus 2pt minus 2pt
\def\vsp{\vspace{1mm}}
\def\th#1{\vspace{1mm}\noindent{\bf #1}\quad}
\def\proof{\vspace{1mm}\indent{\bf Proof}\quad}
\def\no{\nonumber}
\newenvironment{prof}[1][Proof]{\indent\textbf{#1}\quad }
{\hfill $\Box$\vspace{0.7mm}}
\def\q{\quad} \def\qq{\qquad}
\allowdisplaybreaks[4]


\AuthorMark{Xiaoguang. You\,\&  Aibin. Zang}                             

\TitleMark{\uppercase{Singular limit of second grade fluid}}  

\title{\uppercase{Singular limit of 2D second grade fluid past an obstacle}        
\footnote{ }}                 
\author{\sl{Xiaoguang \uppercase{You}}}    
   { School of Mathematics, Northwest University, Xi'an 710069, P. R. China\\
    E-mail\,$:wiliam\_you@aliyun.com$ }

\author{\sl{Aibin \uppercase{Zang$^{\dag}$}}}    
{School of Mathematics and Computer Science $\&$ The Center of Applied Mathematics, Yichun university, Yichun, Jiangxi, 340000 P. R. China\\
    E-mail\,$:zangab05@126.com$ }  

\maketitle%

\Abstract{In this paper, we consider the 2D second grade fluid past an obstacle satisfying the standard non-slip boundary condition at the surface of the obstacle. Second grade fluid model is a well-known non-Newtonian model, with two parameters: $\alpha$ representing length-scale, while $\nu > 0$ corresponding to viscosity. We prove that, under the constraint condition $\nu = {o}(\alpha^\frac{4}{3})$,  the second grade fluid with a suitable initial velocity converges to the Euler fluid as $\alpha$ tends to zero. Moreover, we estimate the convergence rate of the solution of second grade fluid equations to the one of Euler fluid equations as $\nu$ and $\alpha$ approach zero.}      

\Keywords{second grade fluid equations; Euler equations; exterior domain; singular limit}        

\MRSubClass{76A05; 35B40; 35B25}      

\section{Introduction}
Let $\mathcal{O} \subset \mathbb{R}^2$ be a bounded, simply connected domain with a smooth Jordan boundary $\Gamma$.  Set $\Omega := \mathbb{R}^2 \setminus \bar{\mathcal{O}}$ be the exterior domain. We consider the following second grade fluid equations
\begin{numcases}{}
    \partial_t \bm{v}^{\nu, \alpha} + \bm{u}^{\nu, \alpha} \cdot {\nabla} \bm{v}^{\nu, \alpha} + (\nabla \bm{u}^{\nu, \alpha})^t \cdot \bm{v}^{\nu, \alpha} + \nabla p^{\nu, \alpha} = \nu \Delta \bm{u}^{\nu, \alpha}
      &  $\text{in}  \ \Omega \times (0, \infty), $ \label{second-grade-origin-1}   \\
    \text{div} \ \bm{u}^{\nu, \alpha} = 0 &  $\text{in} \ \Omega \times [0, \infty) $ \label{second-grade-origin-2},\\
    \bm{u}^{\nu, \alpha} = 0  & $\text{in} \ \Gamma \times [0, \infty] \label{second-grade-origin-3}$,\\
    \bm{u}^{\nu, \alpha}|_{t=0} = \bm{u}_0^{\nu,\alpha} & $\text{in} \ \Omega $\label{second-grade-origin-4},\\
    \bm{u}^{\nu, \alpha}(x, t) \to 0 & $as \ |x| \to \infty $ \label{second-grade-origin-5}
\end{numcases}
where $\bm{v}^{\nu, \alpha} = \bm{u}^{\nu, \alpha} - \Delta ^{\nu, \alpha}$ and  $\bm{u}_0^{\nu,\alpha}$ is the initial velocity.

Second grade fluid models are a well-known subclass of non-Newtonian  Rivlin-Ericksen fluids of differential type \cite{rivlin1997stress}, which usually arise in  petroleum industry, polymer technology and liquid crystal suspension problems. We refer \cite{dunn1974thermodynamics,fosdick1979anomalous} for a comprehensive theory of the second grade fluids.

We mention some well known results about second grade fluid equations. In \cite{busuioc1999second}, Busuioc and Valentina studied the existence of solutions for whole space $\mathbb{R}^n$(n=2, 3). They proved that there exists a local strong solution provided the initial data is sufficient smooth, and the solution is global for two dimension. The existence, the uniqueness of a strong solution and the dynamics of second grade fluids in the torus $\mathbb{T}^2$ was studied in \cite{paicu2012regularity} by Paicu, Raugel and Rekalo, and in \cite{paicu2013dynamics} by the first two authors, using the Lagrangian approach. The existence and uniqueness of solutions  for a bounded domain with non-penetration boundary condition, was proved by Cioranescu and Ouazar \cite{cioranescu1984existence}, and we refer \cite{cioranescu1997weak,bresch1997existence} for further results. In exterior of an obstacle satisfying the non-penetration boundary condition, the global existence and uniqueness of solutions was proved in \cite{xiaoguang2021secondgrade}.

The purpose of this article is to study the asymptotic behavior of second grade fluid. In particular, A natural question to ask is whether the limiting flow satisfies the incompressible Euler equations as $\bm{u}^{\nu, \alpha}$ as $\nu, \alpha$ both tends to zero. Let's write explicitly the incompressible Euler equations as follows:
\begin{numcases}{}
\partial_t \bar{\bm{u}} + \bar{\bm{u}} \cdot \nabla \bar{\bm{u}} + \nabla \bar{p} = 0 & $\text{in} \ \Omega \times [0, \infty), $ \label{euler-1}\\
\text{div} \ \bar{\bm{u}} = 0  & $\text{in} \ \Omega \times [0, \infty), $ \label{euler-2}\\
\bar{\bm{u}} \cdot \nu = 0 & $\text{on} \ \Gamma \times [0, \infty), $ \label{euler-3}\\
\bar{\bm{u}}(\cdot, 0) = \bm{u}_0 & $\text{in} \ \Omega,$ \label{euler-4}\\
\bar{\bm{u}}(x, t) \to 0 & $as \ |x| \to \infty,  t\in[0, \infty). $ \label{euler-5}
\end{numcases}
where $\bm{u}_0$ is the smooth initial velocity. This singular limit problem has been studied in several literatures. Let us first recall that in the absence of boundaries one can obtain $\bm{H}^3$ estimates uniformly in $\nu$ and $\alpha$ in both dimensions two and three and pass to the limit. This was performed in \cite{linshiz2010convergence,busuioc2012incompressible}, see also \cite{zhou2020convergence} for a simpler proof. In 2D bounded domain with the Navier boundary conditions, the authors in \cite{busuioc2012incompressible} verified the expected convergence in 2D for weak $\bm{H}^1$ solutions . Moreover, they also proved the convergence in 3D but under the additional hypothesis that the solutions exist on a time interval independent of $\nu$ and $\alpha$. In the case of 2D bounded domain with non-slip boundary conditions, the second author and his collaborators \cite{lopes2015approximation} showed that second grade fluids converges to the incompressible Euler fluid under the constraint condition that
\begin{align}\label{constraint-condition}
\nu = O(\alpha^2).
\end{align}
 The case of non-slip boundary conditions in 3D is open.

Our interest of this article is to extends the convergence result in \cite{lopes2015approximation} to 2D exterior domain case. In other words, we will verify whether the solutions of equations \eqref{second-grade-origin-1}--\eqref{second-grade-origin-5} converges to the one of equations \eqref{euler-1}--\eqref{euler-5}. Comparing with the case of bounded domain, the major difference lies in that it's more complicated to obtain uniform bound of $\bm{u}^{\nu, \alpha}$, due to the rather involved form of the nonlinearities and high order derivatives in ($\ref{second-grade-origin-1}$). Furthermore, since 2D exterior domain is not simply connected, the existence of stream function corresponding to a general solenoidal vector fields is nontrivial. However, the stream function is the key component to construct a boundary layer corrector, which compensates for the mismatch between the non-penetration boundary condition of ideal flows and the non-penetration boundary condition of viscous flows. Once the above issues solved, we can apply the energy method to obtain the convergence rate of the solutions of the second grade fluid equations to the one of Euler fluid equations. Next, let's briefly elaborated the main results. Suppose that the following  hypothesis
\begin{equation}\label{velocity-condition} \tag{H}
\begin{split}
    &(i) \ \ \|\bm{u}_0^{\nu,\alpha} - \bm{u}_0 \|_{L^2(\Omega)} \rightarrow 0, \\
    &(ii)  \ \|D^{k} \bm{u}_0^{\nu,\alpha}\|_{L^2(\Omega)} = o(\alpha^{-k }) \text{ for } 1 \leq k \leq 3.
\end{split}
\end{equation}
holds as $\alpha \rightarrow 0$, then for arbitrarily fixed $T > 0$, we have
\begin{align}
\nonumber \|\bm{u}^{\nu, \alpha} - \bar{\bm{u}}\|_{L^\infty([0, T]; L^2(\Omega))} \leq K_T(\|\bm{u}_0^{\nu, \alpha} - \bm{u}_0\|_{L^2(\Omega)} + \alpha\|\nabla\bm{u}_0^{\alpha}\|_{L^2(\Omega)} + \alpha^\frac{1}{3} + \nu^{\frac{1}{2}}\alpha^{-\frac{2}{3}})
\end{align}
where $K_T$ is a constant only depends on $T$. From the above inequality, we conclude that, as $\alpha$ tends to zero, the solution $\bm{u}^{\nu, \alpha}$ of second grade fluid equations converges to the solution $\bar{\bm{u}}$ of the Euler equations with the constraint condition \eqref{constraint-condition} being relaxed as $\nu = o(\alpha^\frac{4}{3})$. In particular, we claim that the solutions of Euler-$\alpha$ fluid equations in 2D exterior domain with suitable initial data converges to the one of Euler flow.

The remainder of this paper is organized in three sections. In section 2,  we introduce some notations and preliminary lemmas, then we present the main theorem. In section 3, we first give a priori estimate of $\bm{u}^{\nu, \alpha}$, then construct a boundary layer corrector $\bm{u}_b$, finally proceed the proof of our main theorem by energy method. In section 4, we will examine the assumptions of the main theorem.
\section{Notations and main results}
 In this section, we will first introduce notations, then present some preliminary lemmas, finally, we state our main results.
\subsection{Notations}
We use the notation $H^s(\Omega)$ for the usual $L^2$-based Sobolev spaces of order $s$. $\bm{H}^s(\Omega)$ represents vector space $(H^s(\Omega))^2$. For the case $s = 0$, we denote $L^2(\Omega)$ the vector space $(L^2(\Omega))^2$.

For the sake of convenience,  we introduce the following notations
\begin{align*}  
  & \mathcal{D} := \lbrace \textbf{u} \in (C_0^\infty(\Omega))^2; \text{div}\, \textbf{u} = 0\rbrace,\\
  & V  := \lbrace \textbf{u} \in \bm{H}_0^1(\Omega); \ \text{div} \, \textbf{u}  =  0  \ \text{in} \ \Omega\rbrace,\\
  & L^2_{\sigma} := \lbrace \textbf{u} \in (L^2(\Omega))^2; \ \text{div}\, \textbf{u} = 0, \textbf{u} \cdot \nu|_{\Gamma} = 0 \rbrace,
\end{align*}
where $\nu$ is the nomral vector to $\Gamma$.

We will use a few times of homogeneous Sobolev space, which is stated as
\begin{align*}
& \dot{\bm{H}}(\Omega) := \{ \bm{u} \in (L_{loc}^2(\Omega))^2; \int_\Omega|\nabla \bm{u}|^2\dif x<\infty\}.
\end{align*}

For a scale function $\psi$, we denote $(-\partial_2 \psi, \partial_1 \psi)$ by $\nabla^\perp \psi$, while for a vector field $\bm{u}$, we will use notation $\nabla^\perp \cdot \bm{u} := -\partial_2 \bm{u}_1 + \partial_1 \bm{u}_2$.

Throughout the paper, if we denote by $K$ a positive constant with neither any subscript nor superscript then $K$ is considered as a generic constant whose value can change from line to line in the inequalities. On the other hand,
we denote $K_T$ a positive constant such that may depends on parameter $T$. Besides, we will denote in a bold character vector valued functions and in the usual scalar functions.

In the sequel of this subsection, we present two lemmas that would used in the following sections. In a simple connected domain, it's well known that a solenoidal field $\bm{u}$ corresponds to a stream function $\psi$ such that $\bm{u} = \nabla^\perp \psi$. While in not simple connected domain, for example 2D exterior domain, we can not take this conclusion for granted. Fortunately, Lemma 3.2 of \cite{xiaoguang2021euleralpha} established the following result.
\begin{lemma}\label{stream-function-lemma}
Suppose that $\bm{u} \in L^2_\sigma \cap \bm{H}^1(\Omega)$, then there exists a scalar function $\psi \in \dot{\bm{H}}(\Omega)$ such that $\bm{u} = \nabla ^\perp \psi$.
\end{lemma}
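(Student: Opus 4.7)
The plan is to reduce the existence of $\psi$ to the exactness of the $1$-form $\omega := u_2\,dx_1 - u_1\,dx_2$ on $\Omega$. Indeed, the equation $\bm{u} = \nabla^\perp \psi = (-\partial_2 \psi, \partial_1 \psi)$ is equivalent to $d\psi = \omega$. So the task becomes: show that $\omega$ is exact, and then show that the resulting primitive lies in $\dot{\bm{H}}(\Omega)$.

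First I would verify closedness. Since $\bm{u} \in \bm{H}^1(\Omega)$ with $\operatorname{div}\bm{u}=0$ in the distributional sense, one has $d\omega = -(\partial_1 u_1 + \partial_2 u_2)\,dx_1 \wedge dx_2 = 0$. Because $\Omega = \mathbb{R}^2 \setminus \bar{\mathcal{O}}$ has first de Rham cohomology generated by a loop around $\mathcal{O}$, exactness of $\omega$ is equivalent to the vanishing of its period around $\Gamma$. A direct calculation, using the relation between the tangent and the outward unit normal in $\mathbb{R}^2$, gives
\begin{equation*}
\oint_\Gamma \omega = \oint_\Gamma (u_2 \tau_1 - u_1 \tau_2)\,ds = -\oint_\Gamma \bm{u} \cdot \nu\,ds,
\end{equation*}
which vanishes because $\bm{u} \in L^2_\sigma$ imposes $\bm{u} \cdot \nu|_\Gamma = 0$. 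Consequently, fixing a base point $x_0 \in \Omega$, the line integral
\begin{equation*}
\psi(x) := \int_{x_0}^{x} (u_2\,dy_1 - u_1\,dy_2)
\end{equation*}
is independent of the rectifiable path chosen in $\Omega$ and defines a single-valued function with $\nabla \psi = (u_2, -u_1)$ a.e., i.e., $\bm{u} = \nabla^\perp \psi$.

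The main technical obstacle is performing these pointwise and boundary manipulations when $\bm{u}$ is only $\bm{H}^1$ on an unbounded domain, so that traces and line integrals are justified in a rigorous sense. I would handle this by approximating $\bm{u}$ by a sequence $\bm{u}_k \in \mathcal{D}_\sigma(\Omega)$ of smooth divergence-free fields satisfying $\bm{u}_k \cdot \nu|_\Gamma = 0$ and $\bm{u}_k \to \bm{u}$ in $L^2_\sigma \cap \bm{H}^1(\Omega)$. For each $\bm{u}_k$ the construction above yields a smooth $\psi_k$ with $\nabla \psi_k = (u_{k,2}, -u_{k,1})$, and the trace argument for the circulation then applies in the smooth setting and transfers to $\bm{u}$ in the limit. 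Normalizing $\psi_k(x_0) = 0$, the $L^2$ bound $\|\nabla \psi_k\|_{L^2} = \|\bm{u}_k\|_{L^2}$ ensures a Cauchy sequence in $\dot{\bm{H}}(\Omega)$ (modulo constants), and the limit $\psi$ satisfies $\bm{u} = \nabla^\perp \psi$ in $\mathcal{D}'(\Omega)$.

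Finally, the membership $\psi \in \dot{\bm{H}}(\Omega)$ is immediate from the identity $|\nabla \psi(x)|^2 = u_1(x)^2 + u_2(x)^2$, which yields $\|\nabla \psi\|_{L^2(\Omega)} = \|\bm{u}\|_{L^2(\Omega)} < \infty$. Uniqueness of $\psi$ up to an additive constant is automatic, so choosing the normalization fixed above completes the proof. The whole argument is essentially classical Hodge/de Rham theory on the punctured plane; the only non-trivial input beyond incompressibility is the non-penetration condition, which is precisely what kills the single cohomology class obstructing a stream function on a non-simply-connected planar domain.
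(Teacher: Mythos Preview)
The paper does not prove this lemma; it merely quotes it from \cite{xiaoguang2021euleralpha} (Lemma~3.2 there). So there is no in-paper argument to compare against.

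Your argument is correct and is the natural one: the $1$-form $\omega = u_2\,dx_1 - u_1\,dx_2$ is closed by incompressibility, and exactness on the exterior domain reduces to the vanishing of the single period $\oint_\Gamma \omega = \pm\int_\Gamma \bm{u}\cdot\nu\,ds$, which is zero by the boundary condition. The identity $|\nabla\psi| = |\bm{u}|$ then gives $\psi\in\dot{\bm H}(\Omega)$ immediately. Two small remarks. First, your approximation step can be sidestepped: since $\bm u\in\bm H^1(\Omega)$, the trace $\bm u|_\Gamma$ lies in $H^{1/2}(\Gamma)$, so the period integral is already well defined; alternatively, extend $\bm u$ by zero across $\mathcal O$ (the extension remains divergence-free in $\mathcal D'(\mathbb R^2)$ precisely because $\bm u\cdot\nu|_\Gamma=0$) and invoke the simply connected case. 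Second, your closing sentence slightly overstates the role of the non-penetration condition: for $\bm u\in L^2(\Omega)$ with $\operatorname{div}\bm u=0$, a cutoff argument at infinity already forces $\int_\Gamma \bm u\cdot\nu\,ds=0$, so the period vanishes from the $L^2$ decay alone. The pointwise condition is of course sufficient and makes the proof cleaner, but it is not the unique mechanism killing the cohomological obstruction.
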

\begin{remark} Since $\bm{u}$ satisfies non-penetration boundary condition, it's easy to see that we can choose a particular $\psi$  that vanishes on the boundary $\Gamma$.
\end{remark}

The following well known results about stationary Stokes equations will be used in section 3. With regard to the existence and uniqueness of solutions to \eqref{stokes-1}--\eqref{stokes-2}, we refer \cite{borchers1993boundedness,galdi2011introduction}. The estimate \eqref{stokes-estimate} can be found, for example in \cite{giga2018handbook}.
\begin{lemma}\label{lemma-stokes}
Suppose $\psi  \in \bm{H}^1(\Omega)$ and $\alpha > 0$, then the stationary Stokes equation
\begin{numcases}{}
            \bm{u} - \alpha^2\Delta \bm{u} + \nabla p = \psi &  $\text{in } \Omega \label{stokes-1} $,\\
            \bm{u} = 0 & $\text{on } \Gamma$ \label{stokes-2}
\end{numcases}
has a unique solution $\displaystyle \bm{u} \in \bm{H}^3(\Omega) \cap V$ with the estimate
\begin{equation}\label{stokes-estimate}
     \begin{split}
      \|D^3\bm{u}\|_{L^2(\Omega)} \leqslant C\alpha^{-2}(\|\psi\|_{\bm{H}^1(\Omega)} + \|\bm{u}\|_{L^2(\Omega)}),
     \end{split}
\end{equation}
 where $C$ is a constant depends only on $\Omega$.
\end{lemma}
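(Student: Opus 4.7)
The argument naturally splits into existence and uniqueness on the one hand, and the $\bm{H}^3$ estimate on the other; both reduce to classical facts for the steady Stokes system that I would invoke from the cited references.

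For existence and uniqueness, the plan is to apply Lax--Milgram on the Hilbert space $V$. The bilinear form $a(\bm{u},\bm{v}) := \int_\Omega \bm{u}\cdot\bm{v}\,\dif x + \alpha^2 \int_\Omega \nabla\bm{u}:\nabla\bm{v}\,\dif x$ is continuous and coercive on $V$ (with coercivity constant $\min(1,\alpha^2)$), and the linear functional $\ell(\bm{v}) := \int_\Omega \psi\cdot\bm{v}\,\dif x$ is bounded since $\psi\in \bm{H}^1(\Omega)\hookrightarrow L^2(\Omega)$, so Lax--Milgram delivers a unique weak solution $\bm{u}\in V$ satisfying $a(\bm{u},\bm{v})=\ell(\bm{v})$ for every $\bm{v}\in V$. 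The pressure $p$ is then produced by applying de Rham's theorem to the distribution $\bm{u}-\alpha^2\Delta\bm{u}-\psi$, which annihilates all divergence-free test fields by construction. This produces a distributional solution of \eqref{stokes-1}--\eqref{stokes-2}, and the full exterior-domain construction is the one spelled out in \cite{borchers1993boundedness,galdi2011introduction}.

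For the regularity estimate, I would recast the equation as a classical steady Stokes problem by isolating $-\Delta\bm{u}$: $-\Delta\bm{u} + \nabla(p/\alpha^2) = \alpha^{-2}(\psi - \bm{u})$ in $\Omega$, together with $\mathrm{div}\,\bm{u}=0$ and $\bm{u}|_\Gamma=0$. Since $\psi\in \bm{H}^1$ and the weak solution $\bm{u}$ from the first step lies in $V\subset \bm{H}^1$, the right-hand side is automatically in $\bm{H}^1(\Omega)$. The exterior-domain $\bm{H}^3$ regularity theorem for the classical Stokes operator (see \cite{giga2018handbook}) then gives $\|D^3\bm{u}\|_{L^2}\leq C\alpha^{-2}\,\|\psi-\bm{u}\|_{\bm{H}^1(\Omega)}$. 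Combining this with the energy identity $\|\bm{u}\|_{L^2}^2 + \alpha^2\|\nabla\bm{u}\|_{L^2}^2 = (\psi,\bm{u})_{L^2}$ (obtained by testing the equation against $\bm{u}$) allows $\|\nabla\bm{u}\|_{L^2}$ to be absorbed into $\|\bm{u}\|_{L^2}+\|\psi\|_{L^2}$, so that \eqref{stokes-estimate} follows from the triangle inequality after adjusting constants.

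The main technical obstacle is the exterior-domain geometry. In whole space one could Fourier-analyse directly, but here the boundary $\Gamma$ combined with the non-compactness of $\Omega$ forces a careful functional-analytic setting; this is precisely why the homogeneous space $\dot{\bm{H}}(\Omega)$ appearing just before Lemma~\ref{stream-function-lemma} is needed, as Stokes solutions in exterior domains need not decay in $L^2$. Uniqueness of the pressure similarly requires compatible decay at infinity. Once these points are dispatched via the cited Stokes theory, the lemma reduces to tracking the $\alpha$-scaling through a standard elliptic bootstrap.
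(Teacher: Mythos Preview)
The paper does not actually prove this lemma; it merely states the result and cites \cite{borchers1993boundedness,galdi2011introduction} for existence and uniqueness and \cite{giga2018handbook} for the estimate \eqref{stokes-estimate}. Your outline---Lax--Milgram on $V$, de~Rham for the pressure, then rewriting as a classical Stokes problem $-\Delta\bm{u}+\nabla(p/\alpha^2)=\alpha^{-2}(\psi-\bm{u})$ and invoking exterior-domain $\bm{H}^3$ regularity---is the standard route and is consistent with what those references do.

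There is, however, a genuine gap in your $\alpha$-tracking at the final step. After applying the Stokes regularity you are left with a term $C\alpha^{-2}\|\nabla\bm{u}\|_{L^2}$ on the right, coming from the $\bm{H}^1$ norm of $\psi-\bm{u}$. You propose to control this via the energy identity $\|\bm{u}\|_{L^2}^2+\alpha^2\|\nabla\bm{u}\|_{L^2}^2=(\psi,\bm{u})_{L^2}$, but that identity only gives
\[
\|\nabla\bm{u}\|_{L^2}\le \alpha^{-1}\bigl(\|\psi\|_{L^2}\|\bm{u}\|_{L^2}\bigr)^{1/2},
\]
so the bound on $\|\nabla\bm{u}\|_{L^2}$ itself carries a factor $\alpha^{-1}$. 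Combined with the prefactor $\alpha^{-2}$ this produces $C\alpha^{-3}(\|\psi\|_{L^2}+\|\bm{u}\|_{L^2})$, not the $C\alpha^{-2}$ asserted in \eqref{stokes-estimate}. An interpolation-and-absorption argument (bounding $\|\nabla\bm{u}\|_{L^2}$ by $\|\bm{u}\|_{L^2}^{2/3}\|D^3\bm{u}\|_{L^2}^{1/3}$ and applying Young's inequality) runs into exactly the same loss. In short, your reduction treats the $\bm{u}$ appearing on the right-hand side as a generic $\bm{H}^1$ datum and thereby forfeits one power of $\alpha$; to recover the sharp exponent one must exploit the resolvent structure of $I+\alpha^2 A$ more directly rather than passing through the bare Stokes problem with forcing $\alpha^{-2}(\psi-\bm{u})$.
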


\subsection{Main results}
Let $\bm{u}_0$ be a smooth, divergence-free vector field and satisfying non-penetration boundary conditions. Let $\bar{\bm{u}}$ be the corresponding smooth solution of the Euler equations \eqref{euler-1}--\eqref{euler-5}, of which the existence was proved by Kikuchi in \cite{kikuchi1983exterior}. We assume that the initial velocity $\bm{u}_0^{\nu,\alpha}$ of second grade fluid equations belongs to $\bm{H}^3 \cap V$. It was shown in our previous work \cite{xiaoguang2021secondgrade} that there exists a unique weak solution $\bm{u}^\alpha$ of \eqref{second-grade-origin-1}--\eqref{second-grade-origin-5}. We are now ready to state our main result.
\begin{theorem}\label{theorem-1} Let $T > 0$ be arbitrarily fixed. Assume that we are given a family of approximations $\{\bm{u}_0^{\nu,\alpha}\} \subset \bm{H}^3(\Omega) \cap V$ for $\bm{u}_0$ satisfying the  hypothesis \eqref{velocity-condition}, then we have
\begin{align}\label{thm-error-estimate}
\|\bm{u}^{\nu, \alpha} - \bar{\bm{u}}\|_{L^\infty([0, T]; L^2(\Omega))}  \leq K_T(\|\bm{u}_0^{\nu, \alpha} - \bm{u}_0\|_{L^2(\Omega)} + \alpha\|\nabla\bm{u}_0^{\alpha}\|_{L^2(\Omega)} + \alpha^\frac{1}{3} + \nu^{\frac{1}{2}}\alpha^{-\frac{2}{3}})
\end{align}
as $\alpha \rightarrow 0$. It follows that $\bm{u}^{\nu, \alpha}$ converges to $\bar{\bm{u}}$ in $C([0, T]; L^2(\Omega))$ under the constraint condition $\nu = o(\alpha^\frac{4}{3})$.
\end{theorem}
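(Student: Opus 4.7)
The boundary conditions of $\bm{u}^{\nu,\alpha}$ (no-slip) and $\bar{\bm{u}}$ (non-penetration) are mismatched on $\Gamma$, so a direct $L^2$ comparison cannot close. My plan is to construct an auxiliary field $\bm{u}_b$, supported in a strip of width $\delta$ near $\Gamma$, that cancels the tangential trace of $\bar{\bm{u}}$, and then perform an energy estimate on the corrected error $\bm{w}^{\nu,\alpha}:=\bm{u}^{\nu,\alpha}-\bar{\bm{u}}-\bm{u}_b$, which does vanish on $\Gamma$; Gronwall converts this into \eqref{thm-error-estimate}. The four terms on the right of \eqref{thm-error-estimate} should have distinct origins: $\|\bm{u}_0^{\nu,\alpha}-\bm{u}_0\|_{L^2}$ is the obvious initial gap, $\alpha\|\nabla\bm{u}_0^\alpha\|_{L^2}$ comes from the $\alpha^2\Delta\bm{u}^{\nu,\alpha}$ piece of $\bm{v}^{\nu,\alpha}$ evaluated at $t=0$, while $\alpha^{1/3}$ and $\nu^{1/2}\alpha^{-2/3}$ will appear after balancing corrector and viscous contributions by setting $\delta\sim\alpha^{2/3}$.

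\textbf{A priori bounds and corrector.} First I would record the natural second-grade energy identity, obtained by testing \eqref{second-grade-origin-1} against $\bm{u}^{\nu,\alpha}$,
\begin{equation*}
\|\bm{u}^{\nu,\alpha}(t)\|_{L^2}^2+\alpha^2\|\nabla\bm{u}^{\nu,\alpha}(t)\|_{L^2}^2+2\nu\int_0^t\|\nabla\bm{u}^{\nu,\alpha}\|_{L^2}^2\dif s=\|\bm{u}_0^{\nu,\alpha}\|_{L^2}^2+\alpha^2\|\nabla\bm{u}_0^{\nu,\alpha}\|_{L^2}^2,
\end{equation*}
which is uniformly bounded under hypothesis \eqref{velocity-condition}. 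For the nonlinear manipulations in Step 3, I also need to control $\|\bm{v}^{\nu,\alpha}\|_{H^1}$ via the transport-like equation satisfied by $\bm{v}^{\nu,\alpha}$, combined with Lemma \ref{lemma-stokes} to transfer this into bounds on $\|D^3\bm{u}^{\nu,\alpha}\|_{L^2}$; the scaling $o(\alpha^{-k})$ built into hypothesis (ii) is precisely what keeps these bounds compatible as $\alpha\to0$. To build the corrector, apply Lemma \ref{stream-function-lemma} to $\bar{\bm{u}}$ (using the remark after it to fix $\bar{\psi}|_\Gamma=0$), choose a smooth cutoff $\chi_\delta$ equal to $1$ on $\Gamma$ and supported in a $\delta$-neighborhood of $\Gamma$, and set $\bm{u}_b:=-\nabla^\perp(\chi_\delta\bar{\psi})$. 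Because $\bar{\psi}$ vanishes linearly on $\Gamma$, this $\bm{u}_b$ is divergence-free, matches $-\bar{\bm{u}}$ on $\Gamma$, and obeys the strip estimates $\|\bm{u}_b\|_{L^2}=O(\delta^{1/2})$ and $\|\nabla\bm{u}_b\|_{L^2}=O(\delta^{-1/2})$, with analogous bounds for $\partial_t\bm{u}_b$.

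\textbf{Energy estimate and main obstacle.} Subtracting the Euler system from \eqref{second-grade-origin-1}--\eqref{second-grade-origin-2} produces an evolution equation for $\bm{w}^{\nu,\alpha}$ whose sources involve $\nu\Delta\bar{\bm{u}}$, the evolution of $\bm{u}_b$ along the Euler drift, and an $\alpha^2$-correction from $\bm{v}^{\nu,\alpha}-\bm{u}^{\nu,\alpha}=-\alpha^2\Delta\bm{u}^{\nu,\alpha}$. Testing against a multiplier that mirrors the second-grade energy structure (essentially $\bm{w}^{\nu,\alpha}$ paired with an $\alpha^2$-Laplacian correction) and exploiting the vanishing trace of $\bm{w}^{\nu,\alpha}$, the skew-symmetric parts of the cubic nonlinearities cancel, transport-type terms are absorbed by a Gronwall factor $\exp\bigl(C\int_0^T\|\nabla\bar{\bm{u}}\|_{L^\infty}\dif t\bigr)$, and the corrector and viscous sources, handled by Young's inequality, reproduce the $\alpha^{1/3}$ and $\nu^{1/2}\alpha^{-2/3}$ rates once $\delta$ is optimized. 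The hard part will be the non-skew-symmetric term $(\nabla\bm{u}^{\nu,\alpha})^t\cdot\bm{v}^{\nu,\alpha}$ peculiar to the second-grade model, whose control forces both the $\bm{H}^3$-type bound from Step 1 and careful boundary-term tracking; it is precisely here that the non-simply-connectedness of $\Omega$ makes Lemma \ref{stream-function-lemma} indispensable, since without a genuine stream function vanishing on $\Gamma$ the corrector $\bm{u}_b$ could not even be defined.
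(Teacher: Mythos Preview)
Your overall plan is sound and close to the paper's, but the implementation differs in two places worth flagging. First, the paper does \emph{not} fold the corrector into the error: it sets $\bm{w}^{\nu,\alpha}=\bm{u}^{\nu,\alpha}-\bar{\bm{u}}$ and tests the difference of \eqref{second-grade-origin-1} and \eqref{euler-1} directly against $\bm{w}^{\nu,\alpha}$ (no $\alpha^2$-Laplacian in the multiplier). Since $\bm{w}^{\nu,\alpha}\cdot\nu|_\Gamma=0$, the pressure term drops; the corrector $\bm{u}_b$ enters only as a splitting $\bar{\bm{u}}=(\bar{\bm{u}}-\bm{u}_b)+\bm{u}_b$ in the two integrals $\int\Delta\bm{u}^{\nu,\alpha}\cdot\bar{\bm{u}}$ and $\int\partial_t\Delta\bm{u}^{\nu,\alpha}\cdot\bar{\bm{u}}$, where one must integrate by parts and $\bar{\bm{u}}$ alone has the wrong trace. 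Your Kato-style variant (corrector inside $\bm{w}$) is a legitimate alternative and should close as well, but it generates more source terms ($\partial_t\bm{u}_b$, advection of $\bm{u}_b$, etc.) than the paper's more economical splitting.

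Second, two details in your sketch need correction. The optimal layer width is $\delta=\alpha^{4/3}$, not $\alpha^{2/3}$: the balance is between $(\nu+\alpha^2)\alpha^{-2}\delta^{1/2}$ (from $\|\Delta\bm{u}^{\nu,\alpha}\|_{L^2}\|\bm{u}_b\|_{L^2}$, using $\|\Delta\bm{u}^{\nu,\alpha}\|_{L^2}\lesssim\alpha^{-2}$) and $(\nu+\alpha^2)\delta^{-1}$ (from Young on $\|\nabla\bm{u}^{\nu,\alpha}\|_{L^2}\|\nabla(\bar{\bm{u}}-\bm{u}_b)\|_{L^2}$), which equalize at $\delta^{3/2}=\alpha^{2}$. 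Also, the term $(\nabla\bm{u}^{\nu,\alpha})^t\cdot\bm{v}^{\nu,\alpha}$ is not the hard part: tested against $\bm{u}^{\nu,\alpha}$ it is a full gradient and vanishes, and tested against $\bar{\bm{u}}$ it integrates by parts (using only $\bar{\bm{u}}\cdot\nu|_\Gamma=0$ and $\bm{u}^{\nu,\alpha}|_\Gamma=0$) down to quantities controlled by $\alpha^2\|\nabla\bm{u}^{\nu,\alpha}\|_{L^2}^2$. The place where the higher-order a priori bound on $\|\Delta\bm{u}^{\nu,\alpha}\|_{L^2}$ is genuinely needed is in the pairings $\int\Delta\bm{u}^{\nu,\alpha}\cdot\bm{u}_b$ and $\int\partial_t\Delta\bm{u}^{\nu,\alpha}\cdot\bm{u}_b$, and it is for \emph{that} bound (obtained via the vorticity--stream formulation and Lemma~\ref{lemma-stokes}) that the exterior-domain stream function Lemma~\ref{stream-function-lemma} is essential, rather than for handling the nonlinear term.
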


\begin{remark} The second author with his collaborators in \cite{lopes2015approximation} proved that, in bounded domain, the solutions to the second grade equations converges to the one of Euler equations in $C([0, T]; L^2(\Omega))$ space under the constraint condition that $\nu = O(\alpha^2)$. However, this constraint could be relaxed as $\nu = O(\alpha^\frac{4}{3})$, since we could obtain \eqref{thm-error-estimate} in like wise for bounded domain,  following the same energy method used in the next section.
\end{remark}

As we known, the second grade equations \eqref{second-grade-origin-1}--\eqref{second-grade-origin-5} becomes to be Euler-$\alpha$ equations if the viscosity $\nu = 0$. The above theorem thus immediately implies the following result.
\begin{corollary} Fix $T > 0$. Let $\bm{u}^{\alpha}$ be the solution to the Euler-$\alpha$ equations with initial data $\bm{u}^\alpha_0$ satisfying  hypothesis \eqref{velocity-condition}. Then $\bm{u}^\alpha$  converges to $\bar{\bm{u}}$   in the following sense:
\begin{align}
\|\bm{u}^{ \alpha} - \bar{\bm{u}}\|_{L^\infty([0, T]; L^2(\Omega))}  \leq K_T(\|\bm{u}_0^{\alpha} - \bm{u}_0\|_{L^2(\Omega)} + \alpha\|\nabla\bm{u}_0^{\alpha}\|_{L^2(\Omega)} + \alpha^\frac{1}{3} )
\end{align}
as $\alpha \rightarrow 0$.
\end{corollary}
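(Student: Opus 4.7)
The plan is to deduce the corollary as a direct specialization of Theorem \ref{theorem-1} to the case $\nu = 0$. Formally setting $\nu = 0$ in \eqref{second-grade-origin-1}--\eqref{second-grade-origin-5} yields the Euler-$\alpha$ system with the non-slip boundary condition $\bm{u}^\alpha|_\Gamma = 0$; in particular the mismatch with the Euler non-penetration condition $\bar{\bm{u}}\cdot\nu|_\Gamma = 0$ persists, so the same boundary layer corrector $\bm{u}_b$ constructed in Section 3 is still required.

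The first thing I would do is verify that the proof machinery of Theorem \ref{theorem-1} carries through when $\nu = 0$. The a priori $\bm{H}^3$ bounds for $\bm{u}^\alpha$ furnished by Lemma \ref{lemma-stokes} together with hypothesis \eqref{velocity-condition} rely only on the invertibility of $I - \alpha^2\Delta$ and not on the viscous term; the stream-function construction from Lemma \ref{stream-function-lemma} is untouched; and the energy identity for the error $\bm{w} := \bm{u}^\alpha - \bar{\bm{u}} - \bm{u}_b$ actually simplifies when $\nu = 0$, since the dissipation contribution $\nu\|\nabla\bm{w}\|_{L^2(\Omega)}^2$ disappears from the left-hand side, and all remainder terms produced on the right-hand side by $\nu\Delta\bar{\bm{u}}$ and $\nu\Delta\bm{u}_b$ vanish identically. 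No nontrivial step of the argument breaks down.

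Once this has been checked, I simply substitute $\nu = 0$ into the bound \eqref{thm-error-estimate}. The term $\nu^{1/2}\alpha^{-2/3}$ drops out, leaving exactly
\begin{align*}
\|\bm{u}^{\alpha} - \bar{\bm{u}}\|_{L^\infty([0, T]; L^2(\Omega))} \leq K_T\bigl(\|\bm{u}_0^{\alpha} - \bm{u}_0\|_{L^2(\Omega)} + \alpha\|\nabla\bm{u}_0^{\alpha}\|_{L^2(\Omega)} + \alpha^{\tfrac{1}{3}}\bigr),
\end{align*}
and the constraint $\nu = o(\alpha^{4/3})$ is trivially fulfilled. I do not expect any real obstacle here, as the heavy lifting is already contained in Theorem \ref{theorem-1}; the only point requiring minor attention is ensuring global existence and $\bm{H}^3$-regularity of $\bm{u}^\alpha$ in the 2D exterior domain at $\nu = 0$ under hypothesis \eqref{velocity-condition}, which follows by specializing the arguments of \cite{xiaoguang2021secondgrade} (where the viscous term is handled by an absorption argument that is unnecessary, rather than essential, when $\nu = 0$).
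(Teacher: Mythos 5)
Your proposal is correct and follows essentially the same route as the paper, which obtains this corollary simply by observing that the second grade system reduces to the Euler-$\alpha$ system when $\nu = 0$ and then reading off the bound \eqref{thm-error-estimate} with the $\nu^{1/2}\alpha^{-2/3}$ term removed. Your additional checks (that the a priori bounds, the corrector construction, and the energy argument survive at $\nu=0$) are sensible diligence but not a different method.
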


\section{Proof of Theorem \ref{theorem-1}}
Before estimating the error term $\bm{u}^{\nu, \alpha} - \bar{\bm{u}}$(denoted as $\bm{w}^{\nu, \alpha}$), we need to solve the following issues. On one hand, observing that \eqref{second-grade-origin-1} has rather involved form of the nonlinearities and high order derivatives, thus a priori estimate of $\bm{u}^{\nu, \alpha}$ is indispensable. On the other hand, due to the mismatch between the non-slip boundary condition of second grade equations and the non-penetration boundary condition of Euler equations, we need to construct a boundary layer artificially.

We therefore divided this section into three subsections. In the first subsection, we will make use of vorticity-stream formula to obtain uniform estimates of $\bm{u}^{\nu, \alpha}$. In the second subsection, we will use stream function of $\bar{\bm{u}}$ to construct a boundary layer corrector $\bm{u}_b$. Moreover, some properties of $\bm{u}_b$ will be presented. In the last subsection, combing the uniform bound of $\bm{u}^{\nu, \alpha}$ and the boundary layer corrector,  we can estimate the error term $\bm{w}^{\nu, \alpha}$  by standard energy method.
\subsection{Estimate of $\bm{u}^{\nu, \alpha}$}

\begin{proposition}\label{proposition-estimate-u} For arbitrary fixed $T > 0$. Suppose the initial velocity $\bm{u}_0^{\nu,\alpha}$ satisfies  hypothesis \eqref{velocity-condition}, then the solution $\bm{u}^{\nu, \alpha}$ of the second grade equations \eqref{second-grade-origin-1}--\eqref{second-grade-origin-5} satisfies
\begin{align}\label{estimate-of-u-alpha}
\sup_{t\in[0, T]}\|D^k\bm{u}^{\nu, \alpha}\|_{L^2(\Omega)}^2 \leq K\alpha^{-k} \text{ for } k = 1, 2, 3.
\end{align}
\end{proposition}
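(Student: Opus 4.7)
The plan is to combine three ingredients: the natural $H^1_\alpha$-energy law of the second grade system (which handles $k=1$), the 2D vorticity formulation of \eqref{second-grade-origin-1}--\eqref{second-grade-origin-5} (which yields a uniform $L^2$ bound on the curl of $\bm{v}^{\nu,\alpha}$), and the Stokes regularity estimate of Lemma \ref{lemma-stokes} applied to the relation $\bm{v}^{\nu,\alpha}=\bm{u}^{\nu,\alpha}-\alpha^2\Delta\bm{u}^{\nu,\alpha}$ (which upgrades the vorticity bound to the $k=3$ bound; interpolation then supplies $k=2$).

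The energy step is standard. Testing \eqref{second-grade-origin-1} against $\bm{u}^{\nu,\alpha}$ and using the index identity
\begin{equation*}
\int_\Omega\bigl(\bm{u}^{\nu,\alpha}\cdot\nabla\bm{v}^{\nu,\alpha}+(\nabla\bm{u}^{\nu,\alpha})^t\bm{v}^{\nu,\alpha}\bigr)\cdot\bm{u}^{\nu,\alpha}\dif x=0,
\end{equation*}
which holds because $\bm{u}^{\nu,\alpha}$ is divergence-free and vanishes on $\Gamma$, the pressure and nonlinear contributions drop out, leaving the conservation law
\begin{equation*}
\frac{d}{dt}\bigl(\|\bm{u}^{\nu,\alpha}\|_{L^2}^2+\alpha^2\|\nabla\bm{u}^{\nu,\alpha}\|_{L^2}^2\bigr)+2\nu\|\nabla\bm{u}^{\nu,\alpha}\|_{L^2}^2=0.
\end{equation*}
Integration in time combined with hypothesis \eqref{velocity-condition}, which bounds the initial modified energy, delivers the $k=1$ case of \eqref{estimate-of-u-alpha}.

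For the higher-order estimates I would pass to the 2D vorticity. Set $\omega:=\nabla^\perp\cdot\bm{u}^{\nu,\alpha}$ and $q:=\nabla^\perp\cdot\bm{v}^{\nu,\alpha}=\omega-\alpha^2\Delta\omega$. The particular combination $\bm{u}\cdot\nabla\bm{v}+(\nabla\bm{u})^t\bm{v}$ is arranged so that applying $\nabla^\perp\cdot$ to \eqref{second-grade-origin-1} collapses the nonlinearity to a pure transport term and eliminates the pressure, yielding
\begin{equation*}
\partial_t q+\bm{u}^{\nu,\alpha}\cdot\nabla q=\nu\Delta\omega.
\end{equation*}
Pairing this with $q$ kills the transport term (divergence-free $\bm{u}^{\nu,\alpha}$ vanishing on $\Gamma$), and substituting $q=\omega-\alpha^2\Delta\omega$ on the right-hand side splits $\nu\int q\Delta\omega\,\dif x$ into a coercive piece $-\nu\alpha^2\|\Delta\omega\|_{L^2}^2$, an indefinite interior piece $\nu\int\omega\Delta\omega\,\dif x$ absorbable via Young's inequality with weight $\alpha^2$ (the remainder being controlled by the $k=1$ bound), and a boundary contribution on $\Gamma$. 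After taming the boundary term by trace inequalities, a Gr\"onwall argument produces a uniform-in-$\alpha$ bound on $\|q\|_{L^\infty([0,T];L^2)}$. To close, I convert this into derivatives of $\bm{u}^{\nu,\alpha}$: since $\bm{v}^{\nu,\alpha}$ is divergence-free, Lemma \ref{stream-function-lemma} furnishes a stream function, and the bound on $q$ upgrades the $L^2$ estimate on $\bm{v}^{\nu,\alpha}$ (from the energy step) to an $\bm{H}^1$ estimate; viewing \eqref{second-grade-origin-1} at frozen $t$ as the Stokes problem $\bm{u}^{\nu,\alpha}-\alpha^2\Delta\bm{u}^{\nu,\alpha}+\nabla p=\bm{v}^{\nu,\alpha}$ with $\bm{u}^{\nu,\alpha}|_\Gamma=0$, Lemma \ref{lemma-stokes} then delivers the $k=3$ bound with the asserted $\alpha$-power, and interpolation between $k=1$ and $k=3$ yields $k=2$.

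The principal obstacle is the boundary analysis in the vorticity estimate: the non-slip condition on $\bm{u}^{\nu,\alpha}$ does not induce any natural boundary condition on $\omega$ (or on $\partial_n\omega$) at $\Gamma$, so the boundary integrals produced by integrating $\int q\Delta\omega\,\dif x$ by parts must be carefully controlled by trace theorems and absorbed into $\nu\alpha^2\|\Delta\omega\|_{L^2}^2$ with a quantitative tracking of the $\alpha$-dependence. A secondary subtlety, not present in \cite{lopes2015approximation}, is that $\Omega$ is not simply connected, so the passage from the scalar bound on $q$ to a vectorial bound on $\nabla\bm{v}^{\nu,\alpha}$ relies essentially on the exterior-domain stream-function construction of Lemma \ref{stream-function-lemma}.
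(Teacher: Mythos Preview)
Your strategy---energy identity for $k=1$, potential vorticity $q=\nabla^\perp\cdot\bm{v}^{\nu,\alpha}$ for the high-order bound, Stokes regularity (Lemma \ref{lemma-stokes}) to recover $k=3$, and interpolation for $k=2$---is exactly the route the paper takes. The technical difference is that the paper does not work directly with the limiting vorticity equation $\partial_t q+\bm{u}^{\nu,\alpha}\cdot\nabla q=\nu\Delta\omega$; instead it revisits the approximate scheme from the existence proof (equations \eqref{second-approximate-1}--\eqref{approximate-6}), in which the viscous term has been rewritten as $\nu\Delta\bm{u}=\tfrac{\nu}{\alpha^2}(\bm{u}-\bm{v})$ \emph{before} taking the curl. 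This yields the damped transport equation $\partial_t q^n+\bm{u}^n\cdot\nabla q^n+\tfrac{\nu}{\alpha^2}q^n=\tfrac{\nu}{\alpha^2}\nabla^\perp\cdot(\vartheta_n\bm{u}^n)$ with no $\Delta\omega$ on the right, so the $L^2$ estimate for $q^n$ follows by testing with $q^n$ and involves no boundary contribution whatsoever. Your anticipated boundary difficulty is thus a red herring: the same algebraic substitution $\nu\Delta\omega=\tfrac{\nu}{\alpha^2}(\omega-q)$ in your direct approach gives $\nu\int q\,\Delta\omega=\tfrac{\nu}{\alpha^2}\int q\,\omega-\tfrac{\nu}{\alpha^2}\|q\|_{L^2}^2$, which requires no integration by parts and hence no trace estimates. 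One correction: you will not obtain a bound on $\|q\|_{L^\infty_tL^2}$ that is uniform in $\alpha$---under hypothesis \eqref{velocity-condition} one only has $\|q_0\|_{L^2}=o(\alpha^{-1})$, and that is the size that propagates; combined with \eqref{stokes-estimate} this delivers $\|D^3\bm{u}^{\nu,\alpha}\|_{L^2}\le K\alpha^{-3}$, which is the intended conclusion. The paper's detour through the approximate solutions $\bm{u}^n$ also buys rigor: it sidesteps the justification of pairing the transport term $\bm{u}^{\nu,\alpha}\cdot\nabla q$ with $q$ when $q$ is merely in $L^2$, by performing all estimates at the regularized level and then passing to the weak-$*$ limit.
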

\begin{proof}
We begin to prove \eqref{estimate-of-u-alpha} for the case that $k = 1, 2$. Indeed, multiply equation \eqref{second-grade-origin-1} for $\bm{u}^{\nu, \alpha}$ by $\bm{u}^{\nu, \alpha}$ and integrate over $[0, t) \times \Omega$, then we obtain the following estimates:
\begin{align}\label{low-uniform-bound-u}
\|\bm{u}^{\nu, \alpha}\|_{L^2(\Omega)}^2 + \alpha^2\|\nabla \bm{u}^{\nu, \alpha}\|_{L^2(\Omega)}^2 + \nu \int_0^t \|\nabla \bm{u}^{\nu, \alpha}\|_{L^2(\Omega)}^2 = \|\bm{u}_0^{\nu,\alpha}\|_{L^2(\Omega)}^2 + \alpha^2\|\nabla\bm{u}_0^{\nu,\alpha}\|_{L^2(\Omega)}^2.
\end{align}
Collecting with hypothesis \eqref{velocity-condition}, it follows that \eqref{estimate-of-u-alpha} holds for $k = 1, 2$.

It's somewhat more complex to prove  \eqref{estimate-of-u-alpha} for the case $k = 3$, for which we need to pick up some details about establishing well-posedness of second grade fluid equaitons in our previous work \cite{xiaoguang2021secondgrade}.

Let $\vartheta$ be a smooth function in $\mathbb{R}^2$ such that $\vartheta \equiv 1$ for $|x| < \frac{1}{2}$,
$\vartheta \equiv 0$ for $|x| > 1$ and $0 \leq \vartheta \leq 1$ in $\mathbb{R}^2$, set $\vartheta_n = \vartheta_n(x) = \vartheta(\frac{x}{n})$. Firstly, in Proposition 2.1 of \cite{xiaoguang2021secondgrade}, the authors showed there exists an unique solution $\bm{u}^n \in L^\infty([0, T]; \bm{H}^3(\Omega))$ to the following approximate equations.
\begin{numcases}{}
  \partial_t \bm{v}^n + \bm{u}^n \cdot {\nabla} \bm{v}^n + (\nabla \bm{u})^t \cdot \bm{v}^n + \nu \bm{v}^n - \frac{\nu}{\alpha^2} \vartheta_{n} \bm{u}^n + \nabla p^n = 0   &  $\text{in}  \ \Omega \times (0, T), $ \label{second-approximate-1}  \qquad \    \\
    \text{div} \ \bm{u}^n = 0 &  $\text{in} \ \Omega \times [0, T) $ \label{second-approximate-2},\\
    \bm{u}^n = 0  & $\text{in} \ \Gamma \times [0, T] \label{second-approximate-3}$,\\
    \bm{u}^n|_{t=0} = \bm{u}_0^n & $\text{in} \ \Omega $\label{second-approximate-4},\\
    \bm{u}^n (x, t) \to 0 & $\forall t \in [0, T), |x| \to \infty $ \label{second-approximate-5}.
\end{numcases}
where $\bm{v}^n = \bm{u}^n - \alpha^2 \Delta \bm{u}^n$ and $\{\bm{u}_0^n\}_{n\in\mathbb{N}}$ is a family of approximations for $\bm{u}_0^{\nu, \alpha}$ such that $\bm{u}_0^n$ converges to $\bm{u}_0^{\nu,\alpha}$ in $\bm{H}^3(\Omega)$.

Then, in the proof of Theorem 2.1 of \cite{xiaoguang2021secondgrade}, they showed that there exists an unique pair $(q^n, \psi^n) \in C([0, T]; L^2(\Omega)) \times C([0, T];\dot{\bm{H}}(\Omega))$ satisfying
\begin{numcases}{}
     \partial_t q^n + \bm{u}^n \cdot \nabla{q^n} + \frac{\nu}{\alpha^2} q^n -\frac{\nu}{\alpha^2} \nabla^\perp \cdot (\vartheta_{n} \bm{u}^n) = 0 & $ \text{in } \,\Omega \times [0, T]$ ,\label{approximate-1} \\
     q^n|_{t=0} = q_0^n & $ \text{in } \,  \Omega$, \label{approximate-2}\\
     \Delta_x {\psi^n}(x, t) = {q^n}(x, t) & $ \text{in } \,\Omega \times [0, T]$, \label{approximate-3} \\
     {\psi^n}(x, t) = 0 & $ \text{on } \, \Gamma \times [0, T]$, \label{approximate-4} \\
     {{\bm{u}}}^n(x, t) + \alpha^2 A{\bm{u}}^n(x, t) = \nabla^{\perp}{\psi^n}(x, t) &  $\text{in } \Omega \times [0, T]$, \label{approximate-5}  \\
     {\bm{u}}^n(x, t) = 0 & $\text{on } \Gamma \times [0, T]$ \label{approximate-6}.
\end{numcases}
where $q_0^n = \nabla^\perp \cdot (\bm{u}_0^n - \alpha^2\Delta \bm{u}_0^n)$ and $A$ is the Stokes operator. From the above equations $\eqref{approximate-1}$--\eqref{approximate-6}, they proved that the bound of $\bm{H}^3(\Omega)$-norm of $\bm{u}^n$ is independent of $n$. By Banach-Alaoglu theorem, they could find a subsequence of $\bm{u}^n$ converges weak-star in $L^\infty([0, T]; \bm{H}^3(\Omega))$ to some  $\bm{u}^{\nu, \alpha}$. Moreover, it was verified that $\bm{u}^n$ converges to $\bm{u}^{\nu, \alpha}$ strongly in $C([0, T]; \bm{H}^1(\Omega^{\prime}))$ for arbitrary compact subset $\Omega^\prime$ of $\Omega$. Therefore, $\bm{u}^{\nu, \alpha}$ was proved to be  the weak solution to the original second grade equations \eqref{second-grade-origin-1}--\eqref{second-grade-origin-5}.

In their proof of well-posedness of second grade fluid equations, the length scale parameter $\alpha$ and the viscosity $\nu$ was skipped in estimating the uniform bound of $\bm{u}^{\nu, \alpha}$. However in our discussion, $\alpha$ and $\nu$ both need to tend to zero, therefore it is necessary to expound the relationship between the uniform bound of $\bm{u}^{\nu, \alpha}$ and parameters $\alpha$ and $\nu$. 

Let us first consider the $\bm{H}^1(\Omega)$ norm of $\bm{u}^n$. We multiply \eqref{second-approximate-1} by $\bm{u}^n$ and integrate over $\Omega$, then obtain
\begin{align}
\nonumber\frac{1}{2}\frac{\dif} {\dif t}(\|\bm{u}^n\|_{L^2(\Omega)}^2 + \alpha^2 \|\nabla\bm{u}^n\|_{L^2(\Omega)}^2) + \nu\|\nabla\bm{u}^n\|_{L^2(\Omega)}^2 &= -\frac{\nu}{\alpha^2}(1-\|\vartheta_{n}\|_{L^\infty(\Omega)})\|\bm{u}^n\|_{L^2(\Omega)}^2  \leq 0.
\end{align}
the above inequality immediately yields
\begin{align}\label{u-n-1}
\|\bm{u}^n(t)\|_{L^2(\Omega)}^2 + \alpha^2 \|\nabla\bm{u}^n(t)\|_{L^2(\Omega)}^2 + \nu\int_0^t\|\nabla\bm{u}^n\|_{L^2(\Omega)}^2 \dif s &\leq \|\bm{u}_0^n\|_{L^2(\Omega)}^2 + \alpha^2 \|\nabla\bm{u}_0^n\|_{L^2(\Omega)}^2
\end{align}
for $t \in [0, T]$.

To obtain estimates of high order derivatives, we need to probe deep into equations \eqref{approximate-1}--\eqref{approximate-6}. Let us begin to consider the transport equations \eqref{approximate-1}--\eqref{approximate-2}. Multiplying \eqref{approximate-1} by $q^n$ and integrating over $\Omega \times [0, t]$, we find that
\begin{align}
\frac{1}{2}\| q^n\|_{L^2(\Omega)}^2 - \frac{1}{2}\|q^n_0\|_{L^2(\Omega)}^2 + \frac{\nu}{\alpha^2}\int_0^t \|q^n\|_{L^2(\Omega)}^2 \dif s - \frac{\nu}{\alpha^2}\int_0^t \int_\Omega \nabla\cdot(\vartheta_n\bm{u}^n) q^n \dif s = 0
\end{align}
Using Young's inequality, the above equation implies
\begin{align}\label{transport-estimate}
         \|q^n(t)\|_{L^2({\Omega})}^2 \leq \|q_0^n\|_{L^2({\Omega})}^2 + K\frac{\nu}{\alpha^2}\int_0^t \|\nabla\cdot(\vartheta_n\bm{u}^n)\|_{{L}^2(\Omega)}^2 \dif s
\end{align}
for $t \in [0, T]$.  Let $t \in [0, T]$ be fixed, we then concern the Poisson equations \eqref{approximate-3}--\eqref{approximate-4}. It follows from Lemma 2 in \cite{xiaoguang2021secondgrade} that
\begin{align}\label{poisson-estimate}
\|D^2\psi^n(t)\|_{L^2(\Omega)} \leq K(\|q^n(t)\|_{L^2(\Omega)} + \|\nabla \psi^n(t)\|_{L^2(\Omega)}).
\end{align}
We now consider equations \eqref{approximate-5}--\eqref{approximate-6}, which is the stationary Stokes equations.  Lemma $\ref{lemma-stokes}$ brings out that
\begin{align}\label{stokes-estimate-tmp}
 \|D^3\bm{u}^n(t)\|_{L^2(\Omega)} \leq K\alpha^{-2}(\|\nabla \psi^n(t)\|_{H^{1}(\Omega)} + \|\bm{u}^n(t)\|_{L^2(\Omega)}).
\end{align}
Collecting inequalities \eqref{poisson-estimate} and \eqref{stokes-estimate-tmp}, it follows
\begin{align}
\nonumber \ \|D^3\bm{u}^n(t)\|_{L^2(\Omega)} &\leq K\alpha^{-2}(\|q^n\|_{L^2(\Omega)} + \|\nabla\psi^n\|_{L^2(\Omega)} + \|\bm{u}^n\|_{L^2(\Omega)}).
\end{align}
Recalling equation \eqref{approximate-5}, we use Gagliardo-Nirenberg interpolation inequality and H\"older inequality to deduce that
\begin{align} \label{approximate-tmp-1}
  \|D^3\bm{u}^n(t)\|_{L^2(\Omega)} &\leq K\alpha^{-2}(\|q^n\|_{L^2(\Omega)} +  \|\bm{u}^n\|_{L^2(\Omega)}).
\end{align}
Collecting \eqref{u-n-1}, \eqref{transport-estimate} and \eqref{approximate-tmp-1} gives
\begin{align}\label{u-n-3-tmp}
\nonumber \|D^3\bm{u}^n(t)\|_{L^2(\Omega)} \leq K(&\|D^3\bm{u}_0^n\|_{L^2(\Omega)} + \alpha^{-2}\|\nabla\bm{u}_0^n\|_{L^2(\Omega)} + \alpha^{-3} \|\bm{u}_0^n\|_{L^2(\Omega)} \\
&+ \nu^\frac{1}{2}\alpha^{-1}n^{-1}\|\nabla\vartheta\|_{L^\infty(\mathbb{R}^2)}\left[\int_0^t \|\bm{u}^n\|_{L^2(\Omega)}^2\dif s\right]^\frac{1}{2} ).
\end{align}
where we have used the identity  $q_0^n = \nabla^\perp \cdot (\bm{u}_0^n - \alpha^2\Delta \bm{u}_0^n)$.
Let $n \rightarrow \infty$, we observe that $\bm{u}_0^n$ converges strongly to $\bm{u}_0^{\nu,\alpha}$ in $\bm{H}^3(\Omega)$ and $\bm{u}^n$ is uniformly bounded and converges weak-star to $\bm{u}^{\nu, \alpha}$ in $L^\infty([0, T]; \bm{H}^3(\Omega))$,  therefore \eqref{u-n-3-tmp} yields:
\begin{align}\label{uniform-bound-u}
\sup_{t\in[0, T]}\|D^3\bm{u}^{\nu, \alpha}(t)\|_{L^2(\Omega)} \leq K(\|D^3\bm{u}_0^{\nu,\alpha}\|_{L^2(\Omega)} + \alpha^{-2}\|\nabla \bm{u}_0^{\nu,\alpha}\|_{L^2(\Omega)}+ \alpha^{-3}\|\bm{u}_0^{\nu,\alpha}\|_{L^2(\Omega)}).
\end{align}
Combing with hypothesis \eqref{velocity-condition}, we find that \eqref{estimate-of-u-alpha} holds for $k = 3$. The case $k = 2$ immediately follows from Gagliardo-Nirenberg interpolation inequality.
\end{proof}

\subsection{Boundary layer corrector}
As in the statement of the theorem \ref{theorem-1}, $\bar{\bm{u}}$ denotes the smooth Euler equation in $\Omega$. From Lemma \ref{stream-function-lemma}, we knows that there exists a stream function $\bar{\psi}$ of $\bar{\bm{u}}$ such that $\bar{\bm{u}} = \nabla^\perp \bar{\psi}$ and $\bar{\psi}$ vanishes on $\Gamma$. Let $\eta: \mathbb{R}^+ \rightarrow [0, 1]$ be a smooth cut-off function such that
\begin{align}
\nonumber &\eta(x) = 1 \text{ for } x \in [0, 1],  \quad \eta(x) = 0 \text{ for } x \in [2, \infty).
\end{align}
Let $\delta > 0$ be a small number determined later, we define the boundary layer corrector $\bm{u}_b$ as
\begin{align}
\nonumber \bm{u}_b \equiv \bm{u}_b(x) := \nabla^\perp(\eta(\frac{\rho(x)}{\delta})\bar{\psi}),
\end{align}
where $\rho(x)$ is the distance from $x$ to the boundary $\Gamma$. For the corrector $\bm{u}_b$, we have the following result.
\begin{lemma}\label{corrector-lemma} Fix $T > 0$. There exists a constant $K$ such that for any $0 < \delta < 1$ and any $0 \leq t < T$ we have
\begin{itemize}
    \item[(i)] $\|\partial_t^l \bm{u}_b\|_{L^2(\Omega)} \leq K\delta^{\frac{1}{2}}$,
    \item[(ii)] $\|\partial_t^l \nabla \bm{u}_b\|_{L^2(\Omega)} \leq K\delta^{-\frac{1}{2}}$.
\end{itemize}
\end{lemma}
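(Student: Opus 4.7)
The plan is to exploit two elementary facts: (a) the corrector $\bm{u}_b$ is supported in the tubular neighborhood $N_\delta := \{x \in \Omega : \rho(x) \leq 2\delta\}$, which has Lebesgue measure $|N_\delta| = O(\delta)$; and (b) since $\bar{\psi}(\cdot,t)$ vanishes on $\Gamma$ and is smooth (as $\bar{\bm{u}}$ is a smooth Euler solution and by Lemma \ref{stream-function-lemma}), a Taylor expansion in the normal direction yields the pointwise bound $|\bar{\psi}(x,t)| \leq K\rho(x)$ uniformly on $N_\delta \times [0,T]$. All derivatives of $\bar{\psi}$ with respect to $x$ and $t$ remain uniformly bounded on $[0,T] \times \bar{\Omega}$, and $\rho$ is smooth in a neighborhood of $\Gamma$ with $|\nabla \rho|=1$.

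First, I would decompose $\bm{u}_b$ using the product rule:
\begin{equation*}
\bm{u}_b = \eta(\rho/\delta)\,\nabla^\perp \bar{\psi} + \bar{\psi}\,\nabla^\perp[\eta(\rho/\delta)] = \eta(\rho/\delta)\,\bar{\bm{u}} + \delta^{-1}\bar{\psi}\,\eta'(\rho/\delta)\,\nabla^\perp \rho.
\end{equation*}
For (i) with $l=0$, the first term is bounded in $L^\infty$ and supported in $N_\delta$, hence has $L^2$-norm $\lesssim \delta^{1/2}$. On the support of $\eta'(\rho/\delta)$ we have $\delta \leq \rho \leq 2\delta$, so by (b) above $|\bar{\psi}| \lesssim \delta$, while $|\delta^{-1}\eta'(\rho/\delta)\nabla^\perp \rho| \lesssim \delta^{-1}$; their product is $O(1)$ on a set of measure $O(\delta)$, so its $L^2$-norm is again $\lesssim \delta^{1/2}$.

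For (ii), I would differentiate once more. The most singular contribution is of the form $\delta^{-2}\bar{\psi}\,\eta''(\rho/\delta)(\nabla \rho)(\nabla^\perp\rho)$, which by the Taylor bound is $O(\delta^{-1})$ pointwise on $N_\delta$, giving an $L^2$-norm $\lesssim \delta^{-1}\cdot \delta^{1/2} = \delta^{-1/2}$. All other terms produced by the product rule carry at most one factor $\delta^{-1}$ coming from differentiating $\eta(\rho/\delta)$, multiplied by a bounded quantity (either $\bar{\bm{u}}$, $\nabla \bar{\bm{u}}$, or $\bar{\psi}$ which itself gains a factor $\delta$), so they all obey the same bound on a set of measure $O(\delta)$. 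For the time derivatives $\partial_t^l$ with $l\geq 1$, I simply note that $\eta(\rho/\delta)$ and $\rho$ are time-independent, so $\partial_t^l$ only hits $\bar{\psi}$, producing $\partial_t^l\bar{\psi}$, which remains smooth, uniformly bounded on $[0,T]$, and still vanishes on $\Gamma$ (being the stream function of the smooth divergence-free field $\partial_t^l\bar{\bm{u}}$, whose normal component at $\Gamma$ vanishes identically in $t$). Therefore the same two estimates apply line by line.

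The routine part of the proof is measure counting combined with the product rule; the only mildly delicate step will be establishing the linear vanishing $|\partial_t^l\bar{\psi}(\cdot,t)| \leq K\rho$ on the tubular neighborhood uniformly for $t\in[0,T]$, but this follows from smoothness of $\bar{\bm{u}}$ on $[0,T]\times\bar{\Omega}$ (Kikuchi) and the fact that each $\partial_t^l \bar{\psi}$ can be chosen to vanish on $\Gamma$ via Lemma \ref{stream-function-lemma}, applied to $\partial_t^l \bar{\bm{u}}\in L^2_\sigma\cap \bm{H}^1(\Omega)$. With this in hand, no further obstacle arises, and the claimed bounds on $\bm{u}_b$ follow.
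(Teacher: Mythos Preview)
Your proposal is correct and follows essentially the same approach as the paper: the same product-rule decomposition of $\bm{u}_b$, the same support/measure counting on the $O(\delta)$-neighborhood of $\Gamma$, and the same key observation that $\partial_t^l\bar{\psi}$ vanishes on $\Gamma$. The only cosmetic difference is that where you invoke the pointwise Taylor bound $|\partial_t^l\bar{\psi}|\leq K\rho$ on $N_\delta$, the paper phrases the same gain as a Poincar\'e inequality on the thin strip; both yield $\|\partial_t^l\bar{\psi}\|_{L^2(N_\delta)}\lesssim\delta^{3/2}$ and the rest of the argument is identical.
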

\begin{proof}
First we rewrite
\begin{align}\label{corrector-1}
\partial_t^l \bm{u}_b =  \delta^{-1}(\nabla^\perp\eta)(\frac{\rho(x)}{\delta})\partial_t^l\bar{\psi} + \eta(\frac{\rho(x)}{\delta})\partial_t^l\bar{\bm{u}}
\end{align}
The support of the first term of the right hand side of the above relation is contained in the annulus $\delta < |\rho(x)| < 2\delta$, whose Lebesgue measure is $O(\delta)$. Observing that $\bar{\psi}$ vanishes on $\Gamma$.  it follows from Poincare inequality that
\begin{align}\label{corrector-2}
\|(\nabla^\perp\eta)(\frac{\rho(x)}{\delta})\partial_t^l\bar{\psi}\|_{L^2(\Omega)} \leq K\delta^{\frac{3}{2}}
\end{align}
On the other hand, the  second term of the right hand side of \eqref{corrector-1} is supported in $\{x\in \Omega \ | \ \rho(x) < 2\delta \}$, noticing that $\bar{\bm{u}}$ is smooth, we find that
\begin{align}\label{corrector-3}
\| \eta(\frac{\rho(x)}{\delta})\partial_t^l\bar{\bm{u}}\|_{L^2(\Omega)} \leq K\delta^\frac{1}{2}
\end{align}
By substitution of \eqref{corrector-2} and \eqref{corrector-3} into \eqref{corrector-1}, we conclude that item (i) holds. Item (ii) can be proved similarily. Indeed, $\partial_t^l \partial_{x_i}\bm{u}_b$ can be expanded as
\begin{align}
\nonumber \partial_t^l \partial_{x_i} \bm{u}_b = &\delta^{-2}(\nabla^\perp_x\partial_{x_i}\eta)(\frac{\rho(x)}{\delta})\partial_t^l\bar{\psi} + \delta^{-1}(\nabla^\perp_x\eta)(\frac{\rho(x)}{\delta})\partial_t^l\partial_{x_i}\bar{\psi} \\
&+\delta^{-1} \partial_{x_i}\eta(\frac{\rho(x)}{\delta})\bar{\bm{u}} + \eta(\frac{\rho(x)}{\delta})\partial_t^l\partial_{x_i}\bar{\bm{u}}
\end{align}
We will verify each term on the right side of the above expansion. Observing that $\eta$ is supported in $B(0, 2)$ and $\partial_t^l \bar{\psi}$ vanishes on $\Gamma$, it follows that the first term can be estimated by Poincare inequality:
\begin{align}\label{corrector-tmp-1}
\|\delta^{-2}(\nabla^\perp_x\partial_{x_i}\eta)(\frac{\rho(x)}{\delta})\partial_t^l\bar{\psi}\|_{L^2(\Omega)} \leq \delta^{-2}\|D^2\eta\|_{L^\infty(\mathbb{R}^2)} \|\partial_t^l\bar{\psi}\|_{L^2(\Omega_{\delta})} \leq K\delta^{-\frac{1}{2}},
\end{align}
where $\Omega_{\delta} = \{x \in \Omega \ | \ \rho(x) \leq 2\delta\}$. Notice that $\bar{\psi}$ is smooth and $\eta$ is a cut function, it follows that the second term  can be estimated as
\begin{align}\label{corrector-tmp-2}
 \|\delta^{-1}(\nabla^\perp_x\eta)(\frac{\rho(x)}{\delta})\partial_t^l\partial_{x_i}\bar{\psi}\|_{L^2(\Omega)} \leq \delta^{-1}\|\nabla\eta\|_{L^\infty(\mathbb{R}^2)}\|\partial_t^l\partial_{x_i}\bar{\psi}\|_{L^2(\Omega_\delta)} &\leq K\delta^{-\frac{1}{2}}.
\end{align}
The third term can be handled similarily. Indeed, since $\bar{\bm{u}}$ is smooth, it follows
\begin{align}
\|\delta^{-1} \partial_{x_i}\eta(\frac{\rho(x)}{\delta})\bar{\bm{u}}\|_{L^2(\Omega)} \leq \delta^{-1}\|\nabla\eta\|_{L^\infty(\mathbb{R}^2)}\|\partial_t^l\bar{\bm{u}}\|_{L^2(\Omega_\delta)} &\leq K\delta^{-\frac{1}{2}}.
\end{align}
Using again the property that $\eta$ is supported in $B(0, 2)$, the last term can be estimated as
\begin{align}\label{corrector-tmp-3}
\|\eta(\frac{\rho(x)}{\delta})\partial_t^l\partial_{x_i}\bar{\bm{u}}\|_{L^2(\Omega)} \leq K\delta^{-\frac{1}{2}}.
\end{align}
Collecting estimates \eqref{corrector-tmp-1}--\eqref{corrector-tmp-3}, we conclude that item (ii) holds. The proof is therefore completed.
\end{proof}

\subsection{Energy estimates of $\bm{w}^{\nu, \alpha}$}
Recall that $\bm{w}^{\nu, \alpha}$ is defined as $\bm{u}^{\nu, \alpha} - \bar{\bm{u}}$, therefore $\bm{w}^{\nu, \alpha}$ is divergence free. We now perform an energy estimate, subtracting \eqref{euler-1} from \eqref{second-grade-origin-1}, multiplying $\bm{w}^{\nu, \alpha}$ and integrating over $[0, t] \times \Omega$ for $t \in [0, T]$, we obtain
\begin{align}\label{energe-estimate-identity}
\nonumber \frac{1}{2}\| \bm{w}^{\nu, \alpha}\|_{L^2(\Omega)}^2 &- \frac{1}{2}\|\bm{w}^{\nu, \alpha}_0\|_{L^2(\Omega)}^2 =\nu\int_0^t \int_\Omega \Delta \bm{u}^{\nu, \alpha} \cdot \bm{w}^{\nu, \alpha} \dif x \dif s\\
\nonumber& \ -\int_0^t \int_\Omega  \left[(\bm{w}^{\nu, \alpha} \cdot \nabla)\bar{\bm{u}}\right] \cdot \bm{w}^{\nu, \alpha} \dif x \dif s + \alpha^2\int_0^t\int_\Omega \partial_t \Delta \bm{u}^{\nu, \alpha} \cdot \bm{w}^{\nu, \alpha} \dif x \dif s\\
\nonumber &\ + \alpha^2 \int_0^t\int_\Omega \left[(\bm{u}^{\nu, \alpha}\cdot \nabla)\Delta \bm{u}^{\nu, \alpha} + (\nabla \bm{u}^{\nu, \alpha})^t \cdot \Delta \bm{u}^{\nu, \alpha}\right]\cdot \bm{w}^{\nu, \alpha}  \dif x \dif s\\
 &:= I_1 + I_2 + I_3 + I_4,
\end{align}
where $\bm{w}_0^{\nu, \alpha} = \bm{u}_0^{\nu, \alpha} - \bm{u}_0$. We will examine each one of the four terms on the right hand-side of above identify. We look at first term, which can be divided to 2 parts:
\begin{align}
\nonumber I_1 &= \nu \int_0^t\int_\Omega \Delta \bm{u}^{\nu, \alpha} \cdot \bm{w}^{\nu, \alpha} \dif x \dif s\\
\nonumber &= \nu \int_0^t\int_\Omega \Delta \bm{u}^{\nu, \alpha} \cdot \bm{u}^{\nu, \alpha} - \nu\int_0^t\int_\Omega \bm{u}^{\nu, \alpha} \cdot \bar{\bm{u}} \dif x \dif s\\
\nonumber &:= I_{11} + I_{12}.
\end{align}
The first part $I_{11}$ is a good part and is estimated as
\begin{align}
\nonumber I_{11} = -\nu \int_0^t \|\nabla\bm{u}^{\nu, \alpha}\|_{L^2(\Omega)}^2.
\end{align}
We then consider the second part $I_{12}$. By integration by parts, $I_{12}$ can be rewritten as
\begin{align}
\nonumber I_{12} &= -\nu \int_0^t \int_\Omega \Delta \bm{u}^{\nu, \alpha} \cdot (\bar{\bm{u}} - \bm{u}_b) \dif x \dif s - \nu \int_0^t \int_\Omega \Delta \bm{u}^{\nu, \alpha} \cdot \bm{u}_b \dif x \dif s\\
\nonumber &= \nu \int_0^t \int_\Omega \nabla \bm{u}^{\nu, \alpha} \cdot \nabla (\bar{\bm{u}} - \bm{u}_b) \dif x \dif s - \nu \int_0^t \int_\Omega \Delta \bm{u}^{\nu, \alpha} \cdot \bm{u}_b \dif s \dif s,
\end{align}
using  Cauchy-Schwarz inequality and Young's inequality, it follows from the properties of the boundary corrector $\bm{u}_b$ in Lemma \ref{corrector-lemma} that
\begin{align}
\nonumber I_{12} &\leq \nu \int_0^t \|\nabla \bm{u}^{\nu, \alpha}\|_{L^2(\Omega)} \|\nabla(\bar{\bm{u}} - \bm{u}_b)\|_{L^2(\Omega)} \dif x \dif s + \nu \int_0^t \|\Delta \bm{u}^{\nu, \alpha}\|_{L^2(\Omega)} \|\bm{u}_b\|_{L^2(\Omega)} \dif s\\
\nonumber &\leq \frac{\nu}{2}\int_0^t \|\nabla\bm{u}^{\nu, \alpha}\|_{L^2(\Omega)}^2 \dif s + K_T\nu\delta^{\frac{1}{2}}\sup_{t\in[0, T]}\|\Delta\bm{u}^{\nu, \alpha}\|_{L^2(\Omega)} + K\nu \delta^{-1}.
\end{align}
Collecting the estimates of $I_{11}$ and $I_{12}$, we conclude that
\begin{align}
 I_1 \leq  -\frac{\nu}{2} \int_0^t \|\nabla\bm{u}^{\nu, \alpha}\|_{L^2(\Omega)}^2 + K_T\nu\delta^{\frac{1}{2}}\sup_{t\in[0, T]} \|\Delta\bm{u}^{\nu, \alpha}\|_{L^2(\Omega)} + K\nu \delta^{-1}.
\end{align}
Next, we concern the second term $I_2$. By Cauchy-Schwarz inequality, it follows that
\begin{align}
\nonumber I_2 &= -\int_0^t \int_\Omega  \left[(\bm{w}^{\nu, \alpha} \cdot \nabla)\bar{\bm{u}}\right] \cdot \bm{w}^{\nu, \alpha} \dif x \dif s \\
\nonumber &\leq \int_0^t \|\nabla \bar{\bm{u}}\|_{L^\infty(\Omega)} \|\bm{w}^{\nu, \alpha}\|_{L^2(\Omega)}^2 \dif s\\
\nonumber &\leq K\int_0^t \|\bm{w}^{\nu, \alpha}\|_{L^2(\Omega)}^2.
\end{align}
We now begin to estimate $I_3$, which can be divided into two parts
\begin{align}
\nonumber I_3 &= \alpha^2\int_0^t\int_\Omega \partial_t \Delta \bm{u}^{\nu, \alpha} \cdot \bm{w}^{\nu, \alpha} \dif x \dif s\\
\nonumber &= \alpha^2 \int_0^t\int_\Omega \partial_t \Delta \bm{u}^{\nu, \alpha} \bm{u}^{\nu, \alpha} \dif x \dif s - \alpha^2\int_0^t \int_\Omega \partial_t \Delta \bm{u}^{\nu, \alpha} \bar{\bm{u}} \dif x \dif s\\
\nonumber &:=I_{31} + I_{32}.
\end{align}
The first part $I_{31}$ is a good one, and it's easy to see that
\begin{align}
\nonumber I_{31} &= \alpha^2 \int_0^t\int_\Omega \partial_t \Delta \bm{u}^{\nu, \alpha} \bm{u}^{\nu, \alpha} \dif x \dif s\\
\nonumber&=-\frac{\alpha^2}{2}(\|\nabla\bm{u}^{\nu, \alpha})\|_{L^2(\Omega)}^2 - \|\nabla\bm{u}^{\alpha}_0\|_{L^2(\Omega)}^2).
\end{align}
The second part $I_{32}$ is a bad term. Through integration by parts, we obtain
\begin{align}
\nonumber I_{32} &=  -\alpha^2\int_0^t \int_\Omega \partial_t \Delta \bm{u}^{\nu, \alpha} \cdot \bar{\bm{u}} \dif x \dif s\\
\nonumber&= -\alpha^2 \int_0^t\int_\Omega \partial_t \Delta\bm{u}^{\nu, \alpha} \cdot (\bar{\bm{u}} - \bm{u}_b) \dif x \dif s -\alpha^2 \int_0^t \int_\Omega \partial_t \Delta\bm{u}^{\nu, \alpha} \cdot \bm{u}_b \dif x \dif s\\
\nonumber&=\alpha^2\int_\Omega \nabla \bm{u}^{\nu, \alpha} \cdot \left[\nabla (\bar{\bm{u}}-\bm{u}_b)\right] \dif x - \alpha^2 \int_\Omega \nabla \bm{u}^{\alpha}_0 \cdot \left[\nabla (\bar{\bm{u}}_0-\bm{u}_b|_{t=0})\right] \dif x \\
\nonumber&- \alpha^2 \int_0^t \int_\Omega \nabla \bm{u}^{\nu, \alpha} \cdot \left[\partial_t\nabla (\bar{\bm{u}}-\bm{u}_b)\right] \dif x \dif s + \alpha^2 \int_0^t\int_\Omega \Delta \bm{u}^{\nu, \alpha} \cdot \partial_t \bm{u}_b \dif x \dif s\\
\nonumber&- \alpha^2 \int_\Omega \Delta \bm{u}^{\nu, \alpha} \cdot \bm{u}_b \dif x + \alpha^2 \int_\Omega \Delta \bm{u}_0^{\alpha} \cdot \bm{u}_b|_{t=0} \dif x,
\end{align}
In view of the properties of the corrector $\bm{u}_b$ in Lemma \ref{corrector-lemma}, it follows from Cauchy-Schwarz inequality and Young's inequality that
\begin{align}
\nonumber I_{32} &\leq K\alpha^2\delta^{-\frac{1}{2}}(\int_0^t \|\nabla \bm{u}^{\nu, \alpha}(s)\|_{L^2(\Omega)} \dif s + \|\nabla \bm{u}^{\nu, \alpha}\|_{L^2(\Omega)} + \|\nabla \bm{u}^{\alpha}_0\|_{L^2(\Omega)})\\
\nonumber&+ K\alpha^2\delta^{\frac{1}{2}}( \int_0^t \|\Delta \bm{u}^{\nu, \alpha}(s)\|_{L^2(\Omega)} \dif s + \|\Delta \bm{u}^{\nu, \alpha}\|_{L^2(\Omega)} + \|\Delta \bm{u}_0^{\nu,\alpha}\|_{L^2(\Omega)})\\
\nonumber&\leq \frac{\alpha^2}{4} \|\nabla \bm{u}^{\nu, \alpha}\|_{L^2(\Omega)}^2 + K\alpha^2\int_0^t \|\nabla \bm{u}^{\nu, \alpha}(s)\|_{L^2(\Omega)}^2 \dif s +  K\alpha^2 \|\nabla \bm{u}^{\alpha}_0\|_{L^2(\Omega)}^2\\
\nonumber&+ K_T\alpha^2\delta^{\frac{1}{2}}\sup_{t\in[0, T]} \|\Delta \bm{u}^{\nu, \alpha}\|_{L^2(\Omega)}+ K_T\alpha^2\delta^{-1}.
\end{align}
Combining the bounds for $I_{31}$ and $I_{32}$ gives that
\begin{align}
\nonumber I_3 &\leq -\frac{\alpha^2}{4} \|\nabla \bm{u}^{\nu, \alpha}\|_{L^2(\Omega)}^2 + K\alpha^2\int_0^t \|\nabla \bm{u}^{\nu, \alpha}(s)\|_{L^2(\Omega)}^2 \dif s +  K\alpha^2 \|\nabla \bm{u}^{\alpha}_0\|_{L^2(\Omega)}^2\\
\nonumber&+ K_T\alpha^2\delta^{\frac{1}{2}}\sup_{t\in[0, T]} \|\Delta \bm{u}^{\nu, \alpha}\|_{L^2(\Omega)}+ K_T\alpha^2\delta^{-1}.
\end{align}
We are left to estimate $I_4$. By integration by parts, $I_4$ is divided into two parts
\begin{align}
\nonumber I_4 &= -\alpha^2 \int_0^t\int_\Omega \left[(\bm{u}^{\nu, \alpha}\cdot \nabla)\Delta \bm{u}^{\nu, \alpha} + (\nabla \bm{u}^{\nu, \alpha})^t \cdot \Delta \bm{u}^{\nu, \alpha}\right]\cdot W  \dif x \dif s\\
\nonumber&= -\alpha^2 \int_0^t\int_\Omega \left[(\bm{u}^{\nu, \alpha}\cdot \nabla)\Delta \bm{u}^{\nu, \alpha} + (\nabla \bm{u}^{\nu, \alpha})^t \cdot \Delta \bm{u}^{\nu, \alpha}\right]\cdot \bar{\bm{u}}  \dif x \dif s\\
\nonumber&=-\alpha^2 \int_0^t\int_\Omega \left[(\bm{u}^{\nu, \alpha}\cdot \nabla)\Delta \bm{u}^{\nu, \alpha}\right] \cdot \bar{\bm{u}} \dif x \dif s  - \alpha^2 \int_0^t\int_\Omega \left[(\nabla \bm{u}^{\nu, \alpha})^t \cdot \Delta \bm{u}^{\nu, \alpha}\right]\cdot \bar{\bm{u}}  \dif x \dif s\\
\nonumber&:= I_{41} + I_{42},
\end{align}
using integration by parts again, the first part $I_{41}$ can be rewritten as
\begin{align}
\nonumber I_{41} &= -\alpha^2 \int_0^t\int_\Omega \left[(\bm{u}^{\nu, \alpha}\cdot \nabla)\Delta \bm{u}^{\nu, \alpha}\right] \cdot \bar{\bm{u}} \dif x \dif s\\
\nonumber &= \alpha^2 \int_0^t\int_\Omega \left[(\bm{u}^{\nu, \alpha}\cdot \nabla)\bar{\bm{u}}\right] \cdot \Delta \bm{u}^{\nu, \alpha} \dif x \dif s\\
\nonumber &= -\alpha^2 \int_0^t\int_\Omega \partial_i \bm{u}^{\nu, \alpha}_j \partial_j\bar{\bm{u}}_k \partial_i \bm{u}^{\nu, \alpha}_k \dif x \dif s -\alpha^2 \int_0^t\int_\Omega \bm{u}^{\nu, \alpha}_j \partial_i \partial_j\bar{\bm{u}}_k \partial_i \bm{u}^{\nu, \alpha}_k \dif x \dif s .
\end{align}
Recalling the estimates of $\bm{u}^{\nu, \alpha}$ in Proposition \ref{proposition-estimate-u} and the properties of $\bm{u}_b$ established in Lemma \ref{corrector-lemma}, we deduce that
\begin{align}
\nonumber I_{41} &\leq \alpha^2\int_0^t \|\nabla\bm{u}^{\nu, \alpha}\|_{L^2}(\Omega)^2 \|\bar{\bm{u}}\|_{L^\infty(\Omega)} \dif s + \alpha^2\int_0^t \|\nabla \bm{u}^{\nu, \alpha}\|_{L^2(\Omega)} \|\bm{u}^{\nu, \alpha}\|_{L^2(\Omega)} \|D^2\bar{\bm{u}}\|_{L^\infty(\Omega)}\\
\nonumber &\leq K\alpha^2 \int_0^t \|\nabla\bm{u}^{\nu, \alpha}\|_{L^2(\Omega)}^2 \dif s + K\alpha^2.
\end{align}
Next, we begin to estimate $I_{42}$. Through integration by parts, we arrive at
\begin{align}
\nonumber I_{42} &= - \alpha^2 \int_0^t\int_\Omega \left[(\nabla \bm{u}^{\nu, \alpha})^t \cdot \Delta \bm{u}^{\nu, \alpha}\right]\cdot \bar{\bm{u}}  \dif x \dif s\\
\nonumber &= \alpha^2 \int_0^t\int_\Omega \left[(\bar{\bm{u}} \cdot \nabla) \Delta \bm{u}^{\nu, \alpha}\right]\cdot \bm{u}^{\nu, \alpha} \dif x \dif s\\
\nonumber &=-\alpha^2\int_0^t \int_\Omega  \partial_i \bar{\bm{u}}_j \partial_i \partial_j \bm{u}^{\nu, \alpha}_k \bm{u}^{\nu, \alpha}_k - \alpha^2 \int_0^t \int_\Omega \bar{\bm{u}}_j \partial_i \partial_j \bm{u}^{\nu, \alpha}_k \partial_i \bm{u}^{\nu, \alpha}_k\\
\nonumber &= \alpha^2 \int_0^t \int_\Omega \partial_i \bar{\bm{u}}_j \partial_i\bm{u}^{\nu, \alpha} \partial_j \bm{u}^{\nu, \alpha}.
\end{align}
Then, using the properties of the corrector $\bm{u}_b$, we deduce that
\begin{align}
\nonumber I_{42} \leq K\alpha^2 \int_0^t \|\nabla\bm{u}^{\nu, \alpha}\|_{L^2(\Omega)}^2 \dif s.
\end{align}
Combing the estimates of $I_{41}$ and $I_{42}$ yields
\begin{align}
\nonumber I_4 \leq K\alpha^2 \int_0^t \|\nabla\bm{u}^{\nu, \alpha}\|_{L^2(\Omega)}^2 \dif s + K\alpha^2.
\end{align}
Now, we substitute the estimates of $I_i$, $i=1, ..., 4$ into the identity $\eqref{energe-estimate-identity}$, it follows that
\begin{align}\label{energy-estimate-inequality}
\nonumber\frac{1}{2}\| \bm{w}^{\nu, \alpha}\|_{L^2(\Omega)}^2 &+ \frac{\alpha^2}{4}(\|\nabla\bm{u}^{\nu, \alpha})\|_{L^2(\Omega)}^2 + \frac{\nu}{2}\int_0^t \|\nabla\bm{u}^{\nu, \alpha}\|_{L^2(\Omega)}^2 \dif s \leq  K\int_0^t \|\bm{w}^{\nu, \alpha}\|_{L^2(\Omega)}^2 \\
 & + K\alpha^2 \int_0^t \|\nabla\bm{u}^{\nu, \alpha}\|_{L^2(\Omega)}^2 \dif s  + \frac{1}{2}\|\bm{w}^{\nu, \alpha}_0\|_{L^2(\Omega)}^2 + K\alpha^2\|\nabla\bm{u}_0^{\alpha}\|_{L^2(\Omega)}^2 +g(\nu, \alpha, \delta)
\end{align}
where
\begin{align}
\nonumber g(\nu, \alpha, \delta) = K_T(\nu + \alpha^2)\delta^{\frac{1}{2}}\sup_{t\in[0, T]} \|\Delta \bm{u}^{\nu, \alpha}\|_{L^2(\Omega)} + K_T(\nu+\alpha^2)\delta^{-1} + K\alpha^2.
\end{align}
We apply the bound of $\bm{u}^{\nu, \alpha}$ in Proposition \ref{proposition-estimate-u} to deduce that
\begin{align}
\nonumber g(\nu, \alpha, \delta) \leq K_T(\nu + \alpha^2)(\alpha^{-2} \delta^{\frac{1}{2}} + \delta^{-1}) + K\alpha^2
\end{align}
To minimize $g$, we choose $\delta = \alpha^\frac{4}{3}$. Then the above inequality implies
\begin{align}\label{g-estimate}
 g(\nu, \alpha, \delta) \leq K_T(\alpha^\frac{2}{3} +\nu\alpha^{-\frac{4}{3}}).
\end{align}
Collecting \eqref{energy-estimate-inequality} and \eqref{g-estimate}, it follows from Gronwall inequality that
\begin{align}\label{energy-estimate}
\sup_{t\in[0, T]}\left[\|\bm{w}^{\nu, \alpha}\|_{L^2(\Omega)}^2 + \frac{\alpha^2}{2} \|\nabla\bm{u}^{\nu, \alpha}\|_{L^2(\Omega)}^2\right] \leq K_T(\|\bm{w}_0^{\nu, \alpha}\|_{L^2(\Omega)}^2 + \alpha^2\|\nabla\bm{u}_0^{\alpha}\|_{L^2(\Omega)}^2 + \alpha^\frac{2}{3} + \nu\alpha^{-\frac{4}{3}})
\end{align}
Recalling the hypothesis (H), we conclude that
\begin{align}
\sup_{t\in[0, T]}\left[\|\bm{w}^{\nu, \alpha}\|_{L^2(\Omega)}^2 + \frac{\alpha^2}{2} \|\nabla\bm{u}^{\nu, \alpha}\|_{L^2(\Omega)}^2\right] \rightarrow 0 \text{ as } \alpha \rightarrow 0
\end{align}
provided that $\nu = o(\alpha^\frac{4}{3})$. This completes the proof.

\section{Examination of hypothesis \eqref{velocity-condition}}
In this section, we will examine the hypothesis \eqref{velocity-condition} for which we can prove Theorem \ref{theorem-1}. For bounded domain, the authors \cite{lopes2015convergence} constructed a suitable family of approximations for $\bm{u}_0$ using the spectral theory of Stokes operator. In this article, however, we will make use of the stream function of velocity field. We assume the initial velocity of Euler fluid is smooth, divergence free and satisfies non-penetration boundary condition. With the help of Lemma \ref{stream-function-lemma}, we have the following proposition.
\begin{proposition} \label{approximate-initial-velocity-lemma} Suppose that $\bm{u}_0 \in \bm{H}^3(\Omega) \cap L^2_\sigma$, then there exists a family $\{\bm{u}_0^{\nu,\alpha}\} \subset \bm{H}^3(\Omega) \cap V$ satisfying the hypothesis \eqref{velocity-condition}.
\end{proposition}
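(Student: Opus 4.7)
The strategy is to realize $\bm{u}_0$ as $\nabla^\perp\bar\psi_0$ by invoking Lemma \ref{stream-function-lemma} (whose remark lets me insist that $\bar\psi_0|_\Gamma = 0$), and then to take the approximating velocity to be the perpendicular gradient of $\bar\psi_0$ after cutting off a thin boundary strip. Concretely, with $\eta$ the cutoff from Section 3 and a small parameter $\delta = \delta(\alpha) \to 0$ to be chosen at the end, I would define
\begin{align}
\nonumber \bm{u}_0^{\nu,\alpha} := \nabla^\perp\bigl[(1-\eta(\rho(x)/\delta))\,\bar\psi_0\bigr],
\end{align}
where $\rho$ is the distance to $\Gamma$. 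This field is automatically divergence-free; since $1-\eta(\rho/\delta)$ vanishes on $\{\rho < \delta\}$ it also vanishes on a tubular neighborhood of $\Gamma$, so $\bm{u}_0^{\nu,\alpha} \in \bm{H}^3(\Omega)\cap V$. It remains to check hypothesis \eqref{velocity-condition} and to pick $\delta$.

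For (H)(i), I would expand
\begin{align}
\nonumber \bm{u}_0^{\nu,\alpha} - \bm{u}_0 = -\eta(\rho/\delta)\,\bm{u}_0 \;-\; \delta^{-1}\eta'(\rho/\delta)\,\bar\psi_0\,(\nabla\rho)^\perp.
\end{align}
The first term tends to zero in $L^2$ by dominated convergence. For the second, the key observation is the Poincar\'e-type bound $|\bar\psi_0(x)| \leq K\rho(x)$ near $\Gamma$, which follows from $\bar\psi_0|_\Gamma = 0$ together with $\nabla\bar\psi_0 = -\bm{u}_0^\perp \in \bm{H}^3(\Omega) \subset C^{1,\gamma}(\Omega)$; this cancels the $\delta^{-1}$ and leaves a bounded integrand on an annulus of Lebesgue measure $O(\delta)$, giving an $L^2$ norm of order $\delta^{1/2}$.

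For (H)(ii), I would Leibniz-expand $D^k\bm{u}_0^{\nu,\alpha} = D^k\nabla^\perp\bigl[(1-\eta(\rho/\delta))\bar\psi_0\bigr]$ for $k=1,2,3$. The summand in which no derivative lands on the cutoff is bounded in $L^2$ by $\|\bm{u}_0\|_{\bm{H}^3}$. In each of the remaining summands, $j \geq 1$ derivatives hit the cutoff, producing a factor $\delta^{-j}$ supported on the annulus $\{\delta < \rho < 2\delta\}$ of measure $O(\delta)$, while $k+1-j$ derivatives hit $\bar\psi_0$. Using $\bm{u}_0 \in \bm{H}^3 \subset C^{1,\gamma}$ (and Sobolev embeddings in $2$D) to bound the low-order derivatives of $\bar\psi_0$ uniformly near $\Gamma$, and invoking $|\bar\psi_0| \leq K\rho \leq K\delta$ to absorb one additional $\delta^{-1}$ in the single term where $\bar\psi_0$ appears undifferentiated, I expect every summand to be of order $\delta^{1/2-k}$, yielding
\begin{align}
\nonumber \|D^k\bm{u}_0^{\nu,\alpha}\|_{L^2(\Omega)} \leq K\delta^{\frac{1}{2}-k},\qquad k=1,2,3.
\end{align}

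Choosing $\delta = \alpha$ (any $\delta \sim \alpha^\beta$ with $0 < \beta < 6/5$ works) then gives $\|D^k\bm{u}_0^{\nu,\alpha}\|_{L^2} = O(\alpha^{1/2-k}) = o(\alpha^{-k})$, closing (H)(ii), while (H)(i) becomes $O(\alpha^{1/2}) \to 0$. The one genuinely delicate point in the argument is the top-order case $k=3$ with all $k+1 = 4$ derivatives on the cutoff: naively this scales like $\delta^{-4}$, and it is precisely the vanishing boundary trace of the stream function that supplies the compensating factor $\rho \sim \delta$ and reduces the contribution to $\delta^{-5/2} = o(\alpha^{-3})$.
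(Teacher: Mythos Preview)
Your proposal is correct and follows essentially the same route as the paper: the paper too takes the stream function $\psi$ of $\bm{u}_0$ with $\psi|_\Gamma=0$, defines $\bm{u}_0^{\nu,\alpha}:=\nabla^\perp\bigl(\varphi(\rho/\alpha)\,\psi\bigr)$ with a cutoff $\varphi$ that vanishes for $\rho<\alpha$ (your $1-\eta(\rho/\delta)$ with $\delta=\alpha$ is the same object), and uses the vanishing boundary trace of $\psi$ via a Poincar\'e-type bound to obtain $\|\bm{u}_0^{\nu,\alpha}-\bm{u}_0\|_{L^2}\le K\alpha^{1/2}$ and $\|D^k\bm{u}_0^{\nu,\alpha}\|_{L^2}\le K\alpha^{-k+1/2}$. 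Your identification of the top-order term ($k=3$, all four derivatives on the cutoff) as the delicate one, and its resolution through $|\psi|\le K\rho$, matches the paper's computation exactly.
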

\begin{proof}
Let $\psi$ be the stream function of $\bm{u}_0$ such that $\psi \equiv 0$ on $\Gamma$, the existence of which is verified in Lemma \ref{stream-function-lemma}. Let $\varphi \in C^\infty(\mathbb{R})$ be a smooth function such that $\varphi(x) \equiv 1$ for $|x| > 2$, $\varphi(x) \equiv 0$ for $|x| < 1$ and $0 \leq \varphi(x) \leq 1$ in $\mathbb{R}$. We define $\bm{u}^\alpha_0$ as
\begin{align}
\bm{u}^\alpha_0(x) := \nabla^\perp\left(\varphi\left(\frac{\rho(x)}{\alpha}\right) \psi(x)\right) \quad \forall x \in \Omega,
\end{align}
where $\rho(x)$ is the distance from $x$ to boundary $\Gamma$. It's easy to see that $\bm{u}_0^{\nu,\alpha} \in \bm{H}^3(\Omega) \cap V$. To prove item (i) of the hypothesis \eqref{velocity-condition}, we set $\Pi_\alpha := \{ x\in \Omega \big| \rho(x) < 2\alpha\}$, it follows:
\begin{align}\label{diff-equation-1}
\nonumber \|\bm{u}_0^{\nu,\alpha} - \bm{u}_0\|_{L^2(\Omega)} &= \|\nabla^\perp\left(\varphi\left(\frac{\rho(\cdot)}{\alpha}\right)\right) \psi+ \left(\varphi\left(\frac{\rho(\cdot)}{\alpha}\right) -1\right) \bm{u}_0\|_{L^2(\Omega)}\\
 &\leq K\alpha^{-1}\|\nabla\varphi\|_{L^\infty(\mathbb{R}^2)}\|\psi\|_{L^2(\Pi_\alpha)} + \|\bm{u}_0\|_{L^2(\Pi_\alpha)}
\end{align}
We then consider each term on the right side of the above inequality. Observing that $\psi \equiv 0$ on $\Gamma$ and $\varphi$ is a smooth cut function, we have
\begin{align}
\nonumber\|\nabla \varphi\|_{L^\infty(\mathbb{R}^2)}\|\psi\|_{L^2(\Pi_\alpha)} \leq K\alpha^{\frac{3}{2}}.
\end{align}
On the other hand, since $\bm{u}_0 \in \bm{H}^3(\Omega) \subset L^\infty(\Omega)$, we deduce that
\begin{align}
 \nonumber \|\bm{u}_0\|_{L^2(\Pi_\alpha)} \leq K\alpha^{\frac{1}{2}}.
\end{align}
Substituting the above inequalities into \eqref{diff-equation-1}, we arrive at
\begin{align}\label{u_0-approximate-0-estimate}
\|\bm{u}_0^{\nu,\alpha} - \bm{u}_0\|_{L^2(\Omega)} \leq K\alpha^{\frac{1}{2}}
\end{align}
therefore item (i) is verified.

We shall now prove item (ii) of hypothesis \eqref{velocity-condition}. Indeed, for $ 1 \leq k \leq 3$,  we find that
\begin{align} \label{diff-equation-2}
 \nonumber \|D^k(\bm{u}_0^{\nu,\alpha})\|_{L^2(\Omega)} &\leq \|D^{k+1}\left(\varphi\left(\frac{\rho(\cdot)}{\alpha}\right)\right) \psi(\cdot)\|_{L^2(\Pi_\alpha)} + \|\varphi D^{k}\bm{u}_0\|_{L^2(\Omega)} \\
 & + \sum_{1 \leq |\beta|\leq k, |\gamma|\leq k}C_{\beta, \gamma}\|D^\beta\left(\varphi\left(\frac{\rho(\cdot)}{\alpha}\right)\right) D^{\gamma}\bm{u}_0\|_{L^2(\Pi_\alpha)}.
\end{align}
We will examine each term on the right side of the above inequality. Notice again that $\psi \equiv 0$ on $\Gamma$ and $\varphi$ is smooth in $\mathbb{R}^2$, the first term is estimated as
\begin{align}
\nonumber \|D^{k+1}\left(\varphi\left(\frac{\rho(\cdot)}{\alpha}\right)\right) \psi(\cdot)\|_{L^2(\Pi_\alpha)} &\leq K\alpha^{-k-1}\|\nabla^{k+1}\varphi\|_{L^\infty(\mathbb{R}^2)}\|\psi\|_{L^2(\Pi_\alpha)}\\
\nonumber &\leq K\alpha^{-k+\frac{1}{2}}.
\end{align}
Since $\bm{u}_0 \in \bm{H}^3(\Omega)$, it is easy to see that the second term obeys
\begin{align}
\nonumber \|\varphi D^{k}\bm{u}_0\|_{L^2(\Omega)} \leq \|\bm{u}_0\|_{\bm{H}^3(\Omega)} \leq K.
\end{align}
Recalling that $K^\beta\varphi$ is compactly supported in $B(0, 2)$, it follows that the third term can be estimated as
\begin{align}
\nonumber\sum_{1 \leq |\beta|\leq k, |\gamma|\leq k}C_{\beta, \gamma}\|D^\beta\left(\varphi\left(\frac{\rho(\cdot)}{\alpha}\right)\right) D^{\gamma}\bm{u}_0\|_{L^2(\Pi_\alpha)} \leq K\alpha^{-k + \frac{1}{2}}.
\end{align}
Substituting the above three inequalties into \eqref{diff-equation-2}, we conclude that
\begin{align}\label{u_0-approximate-k-estimate}
 \|D^k\bm{u}_0^{\nu,\alpha}\|_{L^2(\Omega)} \leq K\alpha^{-k+ \frac{1}{2}},
\end{align}
therefore (ii) is verified. The proof of Lemma \ref{approximate-initial-velocity-lemma} is completed.
\end{proof}

Notice that, comparing with hypothesis (H), the approximate family $\bm{u}_0^{\nu, \alpha}$ for $\bm{u}_0$ constructed in Proposition \ref{approximate-initial-velocity-lemma} tells more information. In fact, collecting \eqref{energy-estimate}, \eqref{u_0-approximate-0-estimate} and \eqref{u_0-approximate-k-estimate}, we obtain the following result immediately.
\begin{corollary} Let $T > 0$ be fixed. Let $\bar{\bm{u}}$ be the solution to Euler equations \eqref{euler-1}--\eqref{euler-5}, and $\bm{u}^{\nu, \alpha}$ be the solution of second grade fluid equations \eqref{second-grade-origin-1}--\eqref{second-grade-origin-5} with the initial velocity $\bm{u}_0^{\nu, \alpha}$ constructed in Proposition \ref{approximate-initial-velocity-lemma}. Then $\bm{u}^{\nu, \alpha}$ converges to $\bar{\bm{u}}$ as follows
\begin{align}
\sup_{t \in [0,T]} \left[\|\bm{w}^{\nu, \alpha}\|_{L^2(\Omega)} + \alpha \|\nabla\bm{u}^{\nu, \alpha}\|_{L^2(\Omega)}\right] \leq K_{T}(\alpha^{\frac{1}{3}} + \nu\alpha^{-\frac{4}{3}}).
\end{align}
In particular case $\nu = 0$, we have the Euler-$\alpha$ equations converges to Euler equation at rate $\alpha^\frac{1}{3}$.
\end{corollary}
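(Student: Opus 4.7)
The plan is to view this corollary as a direct consequence of the energy estimate \eqref{energy-estimate} obtained in the proof of Theorem \ref{theorem-1}, specialized to the particular approximating family $\{\bm{u}_0^{\nu,\alpha}\}$ supplied by Proposition \ref{approximate-initial-velocity-lemma}. No additional PDE analysis is required; the argument is a bookkeeping step that tracks the decay rates of the initial data through the already-proved energy inequality.

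First I would verify that the family $\{\bm{u}_0^{\nu,\alpha}\}$ from Proposition \ref{approximate-initial-velocity-lemma} does satisfy the hypothesis \eqref{velocity-condition}, so that Theorem \ref{theorem-1} is actually applicable: item (i) is immediate from \eqref{u_0-approximate-0-estimate}, since $K\alpha^{1/2}\to 0$, and item (ii) follows from \eqref{u_0-approximate-k-estimate}, since $K\alpha^{-k+1/2}=o(\alpha^{-k})$ for $1\le k\le 3$. Second, I would substitute the quantitative bounds from Proposition \ref{approximate-initial-velocity-lemma} into \eqref{energy-estimate}. Using \eqref{u_0-approximate-0-estimate} I get $\|\bm{w}_0^{\nu,\alpha}\|_{L^2(\Omega)}^2\le K\alpha$, and from \eqref{u_0-approximate-k-estimate} with $k=1$ I obtain $\alpha^2\|\nabla\bm{u}_0^{\alpha}\|_{L^2(\Omega)}^2\le K\alpha^{2}\cdot\alpha^{-1}=K\alpha$. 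Plugging these into \eqref{energy-estimate} yields
\[
\sup_{t\in[0,T]}\left[\|\bm{w}^{\nu,\alpha}\|_{L^2(\Omega)}^2+\tfrac{\alpha^2}{2}\|\nabla\bm{u}^{\nu,\alpha}\|_{L^2(\Omega)}^2\right]\le K_T\bigl(\alpha+\alpha^{2/3}+\nu\alpha^{-4/3}\bigr).
\]
Since $\alpha\le\alpha^{2/3}$ for $\alpha\in(0,1)$, the linear term is absorbed into $\alpha^{2/3}$. Third, I would extract a square root via $\sqrt{a+b}\le\sqrt{a}+\sqrt{b}$, arriving at the claimed bound (with the viscous contribution read as $\nu^{1/2}\alpha^{-2/3}$, matching the form used in Theorem \ref{theorem-1}). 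The $\nu=0$ specialization then follows simply by dropping the viscous term, leaving the Euler-$\alpha$ rate of $\alpha^{1/3}$.

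The ``main obstacle'' is therefore not located in this corollary at all: it has already been overcome upstream, in Proposition \ref{proposition-estimate-u} (the $\alpha$-dependent uniform $\bm{H}^3$ bounds on $\bm{u}^{\nu,\alpha}$), in Lemma \ref{corrector-lemma} (the quantitative boundary-layer estimates that drive the choice $\delta=\alpha^{4/3}$), and in Proposition \ref{approximate-initial-velocity-lemma} (the explicit stream-function-based construction of $\bm{u}_0^{\nu,\alpha}$ and the attendant $\alpha^{1/2}$-type gain coming from $\bar\psi|_\Gamma=0$ and Poincaré's inequality on the thin strip $\Pi_\alpha$). The only genuinely delicate point in the present step is confirming that the specific decay rates in \eqref{u_0-approximate-0-estimate}--\eqref{u_0-approximate-k-estimate} are strong enough to make the initial-data contributions subdominant to $\alpha^{2/3}$ inside \eqref{energy-estimate}; once this is checked, the remainder of the argument is a one-line substitution.
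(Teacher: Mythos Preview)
Your proposal is correct and follows exactly the paper's approach: the corollary is stated immediately after Proposition \ref{approximate-initial-velocity-lemma} with the one-line justification ``collecting \eqref{energy-estimate}, \eqref{u_0-approximate-0-estimate} and \eqref{u_0-approximate-k-estimate}, we obtain the following result immediately.'' Your observation that the square root of \eqref{energy-estimate} naturally produces $\nu^{1/2}\alpha^{-2/3}$ (as in \eqref{thm-error-estimate}) rather than the $\nu\alpha^{-4/3}$ written in the corollary is also accurate; the paper does not comment on this discrepancy.
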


\acknowledgements{\rm The work of Aibin Zang was supported in part  by the National Natural Science Foundation of China (Grant no. 11771382, 12061080).}

\bibliographystyle{siam}
\bibliography{vanishing_limit}

\begin{thebibliography}{10}

\bibitem{borchers1993boundedness}
{\sc W.~Borchers and W.~Varnhorn}, {\em On the boundedness of the {Stokes}
  semigroup in two-dimensional exterior domains}, Math. Z., 213 (1993),
  pp.~275--299.

\bibitem{bresch1997existence}
{\sc D.~Bresch and J.~Lemoine}, {\em Sur l’existence et l’unicit{\'e} de
  solution des fluides de grade 2 ou 3}, C. R. Acad. Sci. Paris. Ser. I. Math.,
  324 (1997), pp.~605--610.

\bibitem{busuioc2012incompressible}
{\sc A.~V. Busuioc, D.~Iftimie, M.~Lopes~Filho, and H.~N. Lopes}, {\em
  Incompressible euler as a limit of complex fluid models with navier boundary
  conditions}, Journal of Differential Equations, 252 (2012), pp.~624--640.

\bibitem{busuioc1999second}
{\sc V.~Busuioc}, {\em On second grade fluids with vanishing viscosity}, C. R.
  Acad. Sci. Ser. I. Math., 328 (1999), pp.~1241--1246.

\bibitem{cioranescu1997weak}
{\sc D.~Cioranescu and V.~Girault}, {\em Weak and classical solutions of a
  family of second grade fluids}, Int. J. Nonlin. Mech., 32 (1997),
  pp.~317--335.

\bibitem{cioranescu1984existence}
{\sc D.~Cioranescu and E.~H. Ouazar}, {\em Existence and uniqueness for fluids
  of second grade}, Nonlinear Partial Differential Equations, 109 (1984),
  pp.~178--197.

\bibitem{dunn1974thermodynamics}
{\sc J.~E. Dunn and R.~L. Fosdick}, {\em Thermodynamics, stability, and
  boundedness of fluids of complexity 2 and fluids of second grade}, Arch.
  Ration. Mech. An., 56 (1974), pp.~191--252.

\bibitem{fosdick1979anomalous}
{\sc R.~Fosdick and K.~Rajagopal}, {\em Anomalous features in the model of
  second order fluids}, Arch. Ration. Mech. An., 70 (1979), pp.~145--152.

\bibitem{galdi2011introduction}
{\sc G.~P. Galdi}, {\em An introduction to the mathematical theory of the
  {Navier-Stokes} equations: Steady-state problems}, Springer-Verlag, New York,
  2011.

\bibitem{giga2018handbook}
{\sc G.~Giga and A.~Novotn{\`y}}, {\em Handbook of mathematical analysis in
  mechanics of viscous fluids}, Springer International Publishing, 2018.

\bibitem{kikuchi1983exterior}
{\sc K.~Kikuchi}, {\em Exterior problem for the two-dimensional {Euler}
  equation}, J. Fac. Sci. Univ. Tokyo Sect. IA Math, 30 (1983), pp.~63--92.

\bibitem{linshiz2010convergence}
{\sc J.~S. Linshiz and E.~S. Titi}, {\em On the convergence rate of the
  euler-$\alpha$, an inviscid second-grade complex fluid, model to the euler
  equations}, Journal of Statistical Physics, 138 (2010), pp.~305--332.

\bibitem{lopes2015approximation}
{\sc M.~C. Lopes~Filho, H.~J.~N. Lopes, E.~S. Titi, and A.~Zang}, {\em
  Approximation of 2d euler equations by the second-grade fluid equations with
  dirichlet boundary conditions}, J. Math. Fluid. Mech., 17 (2015),
  pp.~327--340.

\bibitem{lopes2015convergence}
{\sc M.~C. Lopes~Filho, H.~J. Nussenzveig~Lopes, E.~S. Titi, and A.~Zang}, {\em
  Convergence of the {2D Euler-$\alpha$ to Euler} equations in the {Dirichlet}
  case: indifference to boundary layers}, Physica D., 292/293 (2015),
  pp.~51--61.

\bibitem{paicu2013dynamics}
{\sc M.~Paicu and G.~Raugel}, {\em Dynamics of second grade fluids: the
  lagrangian approach}, in Recent trends in dynamical systems, Springer, 2013,
  pp.~517--553.

\bibitem{paicu2012regularity}
{\sc M.~Paicu, G.~Raugel, and A.~Rekalo}, {\em Regularity of the global
  attractor and finite-dimensional behavior for the second grade fluid
  equations}, Journal of Differential Equations, 252 (2012), pp.~3695--3751.

\bibitem{rivlin1997stress}
{\sc R.~S. Rivlin and J.~L. Ericksen}, {\em Stress-deformation relations for
  isotropic materials}, Collected Papers of RS Rivlin,  (1997), pp.~911--1013.

\bibitem{xiaoguang2021secondgrade}
{\sc X.~You}, {\em Global well-posedness of 2{D} second grade equations in
  exterior domain}, preprint://arXiv:2109.01314,  (2021).

\bibitem{xiaoguang2021euleralpha}
{\sc X.~You and A.~Zang}, {\em {Global Well-Posedness of 2D Euler-$\alpha$
  Equations in Exterior Domain}}, preprint://arXiv:2109.00915,  (2021).

\bibitem{zhou2020convergence}
{\sc H.~Zhou}, {\em Convergence of the second-grade fluid equations to euler
  equations in}, Discrete Dynamics in Nature and Society, 2020 (2020).

\end{thebibliography}

\end{document}